\newcommand{\pair}[2]{\left\langle #1 , #2\right\rangle}
\def\into{\mathrel{\hookrightarrow}}
\def\onto{\mathrel{\twoheadrightarrow}}
\newcommand{\bangles}[1]{\left\langle #1 \right\rangle}
\newcommand{\sett}[1]{\left\{#1\right\}}
\newcommand{\rquotient}{\backslash}
\DeclareMathOperator{\End}{End}
\newcommand{\HomOver}[3]{\mathrm{Hom}_{#1}(#2, #3)}
\DeclareMathOperator{\Mat}{Mat}
\newcommand{\id}{\mathrm{id}}
\newcommand{\Bb}{\mathcal{B}}
\newcommand{\pt}{\mathrm{pt}}
\newcommand{\X}{\mathcal{X}}
\newcommand{\HH}{\mathbf{H}}
\newcommand{\Ff}{\mathcal{F}}
\newcommand{\Gg}{\mathcal{G}}
\newcommand{\dInt}[2]{\int {#1} \,\mathrm{d} {#2}}
\newcommand{\dIntOver}[3]{\int_{#1} {#2} \,\mathrm{d} {#3}}
\DeclareMathOperator{\vol}{vol}
\newcommand{\trace}[2]{\mathrm{trace}\left({#1}\,,{#2}\right)}
\newcommand{\N}{\mathbb{N}}
\newcommand{\Z}{\mathbb{Z}}
\newcommand{\C}{\mathbb{C}}
\newcommand{\Oo}{\mathcal{O}}
\newcommand{\g}{\mathfrak{g}}
\newcommand{\bB}{\mathfrak{b}}
\newcommand{\Nn}{\tilde{\mathcal{N}}}
\newcommand{\Waff}{W_{\mathrm{aff}}}
\newcommand{\cc}{\mathbf{c}}
\newcommand{\GL}{\mathrm{GL}}
\newcommand{\SL}{\mathrm{SL}}
\newcommand{\Gm}{{\mathbb{G}_\mathrm{m}}}
\newcommand{\Sn}{\mathfrak{S}}
\newcommand{\St}{\mathrm{St}}
\newcommand{\bq}{\mathbf{q}}
\newtheorem{theorem}{Theorem}
\newtheorem*{theorem*}{Theorem}
\newtheorem{cor}{Corollary}
\newtheorem{prop}{Proposition}
\newtheorem{lem}{Lemma}
\theoremstyle{definition}
\newtheorem{dfn}{Definition}
\newtheorem*{dfn*}{Definition}
\theoremstyle{remark}
\newtheorem{ex}{Example}
\newtheorem*{ex*}{Example}
\newtheorem{rem}{Remark}
\providecommand{\keywords}[1]{\textbf{\textit{Keywords---}} #1}
\title{A positivity property in the based ring of the lowest two-sided cell}
\date{\today}
\author{Stefan Dawydiak \thanks{School of Mathematics and Statistics, University of Glasgow, University Place,
Glasgow G12 8QQ; email \texttt{stefan.dawydiak@glasgow.ac.uk}}}
\begin{document}
\maketitle
%\listoftodos

\begin{abstract}
Let $\Waff$ be an extended affine Weyl group and $\HH$ and $J$ be the corresponding affine and asymptotic Hecke algebras with standard bases $\sett{T_x}$ and $\sett{t_w}$, respectively. Viewing $J$ as a subalgebra of the $\bq^{-\frac{1}{2}}$-adic completion of $\HH$, we give formulas for the coefficient of $T_x$ in $t_w$ for various $x$ and $w$ in the lowest two-sided cell, in terms of generalized exponents of the Langlands dual group, under a hypothesis on the left cell containing $w$. In particular our results hold for the canonical left cell. For such $w$ we also define a seemingly new positive basis for the corresponding subring of $J$. For $\GL_n$, we give partial results for some other cells.
\end{abstract}
\keywords{affine Hecke algebra, asymptotic Hecke algebra, generalized exponents, Schwartz space of the basic affine space, Hall-Littlewood polynomial, Schur function}

\section{Introduction}
Let $G$ be a connected reductive group defined and split over a non-archimedean local field $F$, with root datum $(X^*, \Phi, X_*, \Phi^\vee)$. Choose a Borel subgroup of $G$ and let $I$ be the corresponding Iwahori subgroup. It is well known that for the affine Hecke algebra $\HH$ of $\Waff$ over 
$\C[\bq^{\pm\frac{1}{2}}]$, $\HH|_{\bq=q}\simeq C_c(I\rquotient G(F)/I)$, where $q$ is the cardinality of the residue field of $F$. Therefore $\HH$ relates to the representation theory of $G(F)$.

In \cite{affineII}, Lusztig constructed the asymptotic Hecke algebra $J$ in particular for affine Weyl groups, and equipped it with a basis $\sett{t_w}_{w\in \Waff}$ and an injection $\phi\colon H\into J[\bq,\bq^{-1}]$, such that %
\[
(\phi\circ{}^\dagger(-))^{-1}(t_w)=\sum_{x\in \Waff}a_{x,w}T_x
\]
for $a_{x,w}\in\Z((\bq^{-1}))$, where ${}^\dagger(-)$ is the involution of $H$ recalled below. In \cite{Plancherel}, we used the interpretation of $J$ in terms of harmonic analysis on $G(F)$ of \cite{BK} to show that $a_{x,w}$ is a rational function with denominator dividing a bounded power of the Poincar\'{e} polynomial $P_W(\bq)$ of $W$. In the present paper, we give formulas for the numerators of $a_{x,w}$ in special cases, giving the representation-theoretic information hoped for in the paragraph following \cite[Conj. 1.2]{SL2}.
Let $\widetilde{G^\vee}$ be the universal cover of $G^\vee$.
\begin{theorem}
\label{thm K theory formulas}
Let $d, d'$ be distinguished involutions in the lowest two-sided cell $\cc_0$ corresponding to $u,u'\in W$ via Shi's parametrization \cite{Shi}, let $t_w\in t_d Jt_{d'}$ correspond to a dominant weight $\lambda$ under the same. 
Let $\sett{\Oo(x_u)}_{u\in W}$ be Steinberg's $K(\pt/\widetilde{G^\vee})$-basis of $K(G^\vee/B^\vee/\widetilde{G^\vee})$.
If the dual class to $\Oo(x_{u'})$ with respect to \eqref{eqn tor pairing} is represented by a shifted line bundle $\Oo(y_{u'})[n(u')]$, then for all $\gamma$ sufficiently dominant, we have
\begin{align}
t_{w}(I\varpi^{\gamma}I)
&=
(-1)^{\ell(\omega(\gamma)_f)+\ell(w_0)+n(d')}
\frac{\bq^{-\frac{\ell(\gamma)}{2}}}{P_W(\bq)}
\sum_{i}\dim\HomOver{G^\vee}{V(\lambda)}{V(\gamma-x_{u}-y_{u'}-2\rho)\otimes\Oo(\mathcal{N}^\vee)_i}\bq^{-i}
\label{tdJtd' dominant formula denom}
\\
&=
\frac{(-1)^{\ell(\omega(\gamma)_f)+\ell(w_0)+n(d')}\bq^{-\ell(w_0)}}{\prod_{i=1}^{r}(1-\bq^{-1})}
\bq^{-\frac{\ell(\gamma)}{2}}
\sum_{i}\dim\HomOver{G^\vee}{V(\lambda)}{V(\gamma-x_{u}-y_{u'}-2\rho)\otimes\Oo(\g^\vee)_i}\bq^{-i},
\label{tdJtd' dominant formula no denom}
\end{align}
%\todo[inline]{write ti $\Hom{V(\lambda)\otimes V(\gamma^*)}{\Oo(\mathcal{N})}$ makes it clear that it has to be symmetry between $\gamma$ and $\lambda^*$. 
%
%The reason this doesn't contradict Parseval is that Fourier transform in an isometry for the inner product on $H$ obeying $\pair{h}{h'}=\pair{h^*}{(h')^*}$ for $T_w^*=T_{w^{-1}}$ so it's not symmetric just because $h$, $h'$ are real-valued. ``Symmetric" in Nelson-Ram must just be typo, they must mean Hermitian for their bar involution.}
%
where $\Oo(\mathcal{N}^\vee)_i$ and $\Oo(\g^\vee)_i$ are the space of homogeneous degree $i$ global functions on $\mathcal{N}^\vee$ and $\g^\vee$, respectively, $r$ is the semisimple rank of $\g^\vee$ and $\omega(\gamma)_f$ is the $W$-component of the projection $\omega(\gamma)$ of $\gamma$ onto $\pi_1(G)$.
\end{theorem}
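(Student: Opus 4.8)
The plan is to compute $t_w(I\varpi^\gamma I)$ by expressing the image $\phi({}^\dagger t_w)$ in $\HH[\bq,\bq^{-1}]$ in terms of the standard basis, then evaluating at the point $I\varpi^\gamma I$. The key geometric input is the identification of the lowest two-sided cell module: by the work of Xi and Bezrukavnikov–Ostrik following Lusztig, the based ring $J_{\cc_0}$ is isomorphic to the equivariant $K$-theory $K^{\widetilde{G^\vee}}(\widetilde{\mathcal{N}}^\vee \times_{\mathcal{N}^\vee} \widetilde{\mathcal{N}}^\vee)$, and under Shi's parametrization the summand $t_d J t_{d'}$ with $t_w \leftrightarrow \lambda$ corresponds to a concrete class built from the Springer resolution and the equivariant line bundles $\Oo(x_u)$, $\Oo(x_{u'})$. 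First I would unwind this dictionary to write $t_w$ as (up to the sign and normalization factors) a $K$-theory class whose character is exactly the generalized exponent multiplicity $\sum_i \dim\Hom_{G^\vee}(V(\lambda), V(\mu)\otimes \Oo(\mathcal{N}^\vee)_i)\bq^{-i}$ for the appropriate shifted weight $\mu = \gamma - x_u - y_{u'} - 2\rho$; the dual-basis hypothesis on $\Oo(x_{u'})$ is precisely what converts the second tensor factor into a shifted line bundle $\Oo(y_{u'})[n(u')]$, producing the sign $(-1)^{n(d')}$ and allowing the pairing \eqref{eqn tor pairing} to be evaluated as a Hom-space.

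Second, I would use the harmonic-analytic interpretation of $J$ from \cite{BK} (as recalled in \cite{Plancherel}) together with the bound from \cite{Plancherel} that $a_{x,w}$ has denominator a power of $P_W(\bq)$: this tells us a priori that $t_w(I\varpi^\gamma I)$ has the shape $\bq^{-\ell(\gamma)/2} \cdot (\text{polynomial in }\bq^{-1})/P_W(\bq)$, so it suffices to pin down the numerator. The factor $\bq^{-\ell(\gamma)/2}$ comes from $T_{\varpi^\gamma}$ acting on the standard basis element, and for $\gamma$ sufficiently dominant the support of $t_w$ — which is finite by Lusztig's construction and by the boundedness in \cite{Plancherel} — stabilizes, so the sum over $x$ in $(\phi\circ{}^\dagger)^{-1}(t_w) = \sum_x a_{x,w} T_x$ restricted to the orbit of $I\varpi^\gamma I$ becomes the generalized-exponent generating function evaluated against $\gamma$. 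The sign $(-1)^{\ell(\omega(\gamma)_f) + \ell(w_0)}$ tracks the passage through ${}^\dagger(-)$ and the Weyl-group component of the projection to $\pi_1(G)$; I would extract it by a careful bookkeeping of the duality involution on the $\bq^{-1/2}$-adic completion, using that ${}^\dagger T_x = (-\bq)^{\ell(x)} T_{x^{-1}}^{-1}$ and matching $\ell(\varpi^\gamma)$ modulo $2$ with $\ell(\gamma) + \ell(\omega(\gamma)_f)$.

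Finally, to pass from \eqref{tdJtd' dominant formula denom} to \eqref{tdJtd' dominant formula no denom} I would use the Koszul resolution relating functions on the nilpotent cone $\mathcal{N}^\vee$ to functions on $\g^\vee$: one has $\Oo(\g^\vee) = \Oo(\mathcal{N}^\vee) \otimes \Sym(\g^\vee{}^*)^{W}$-type factorization, more precisely $\Oo(\mathcal{N}^\vee)_i$ and $\Oo(\g^\vee)_i$ are related via the regular-sequence $\Oo(\g^\vee) \cong \Oo(\mathcal{N}^\vee) \otimes_{\C} \C[p_1,\dots,p_r]$ with generators $p_j$ in degrees equal to the exponents $m_j+1$, so that $\sum_i \dim(\cdot\otimes\Oo(\g^\vee)_i)\bq^{-i} = \frac{1}{\prod_{j=1}^r (1-\bq^{-(m_j+1)})}\sum_i \dim(\cdot\otimes\Oo(\mathcal{N}^\vee)_i)\bq^{-i}$; combined with the identity $P_W(\bq) = \prod_{j=1}^r \frac{1-\bq^{m_j+1}}{1-\bq}$ this yields the clean denominator $\prod_{i=1}^r(1-\bq^{-1})$ and the power $\bq^{-\ell(w_0)}$ (using $\ell(w_0) = \sum m_j = r + \sum(m_j+1 - 1) $ appropriately, or rather $\ell(w_0)=\#\Phi^+$ and the exponent identity). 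The main obstacle will be the second step: controlling the stabilization of the support of $t_w$ for $\gamma$ "sufficiently dominant" and verifying that the pairing \eqref{eqn tor pairing} genuinely computes the Hom-space with the stated sign — this is where the hypothesis on the left cell containing $w$ (and hence the dual-basis hypothesis on $\Oo(x_{u'})$) is essential, and where one must be most careful not to lose or gain a sign or a shift $[n(u')]$.
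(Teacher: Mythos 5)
Your proposal assembles the right ingredients (the $K$-theoretic model of $J_0$, the Braverman--Kazhdan/Plancherel interpretation, the dual-class hypothesis producing the shift $[n(u')]$, and Kostant's theorem $\Oo(\g^\vee)\cong\Oo(\mathcal{N}^\vee)\otimes\Oo(\g^\vee)^{G^\vee}$ for the passage between the two displayed formulas), but the central computational step is missing, and the mechanism you substitute for it does not work.

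First, the claim that the support of $t_w$ is finite and ``stabilizes'' is false: the $a_{x,w}$ lie in $\Z((\bq^{-1}))$ with infinitely many nonzero, and the paper itself notes that for $\SL_2$ every $t_w$ is supported on all of $G(F)$. So you cannot read off $a_{\varpi^\gamma,w}$ by truncating the sum $\sum_x a_{x,w}T_x$; the assertion that this sum ``restricted to the orbit of $I\varpi^\gamma I$ becomes the generalized-exponent generating function'' is precisely the statement to be proved, not a step toward it. The actual mechanism is: after reducing to $\bq=q$ for all $q>1$ by rationality of both sides, one writes $t_w(\varpi^\gamma)=q^{-\ell(\gamma)}(t_w\star T_{-\gamma})(1)$ and evaluates at the identity by the Plancherel formula. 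Since $J_0$ kills all tempered representations except the unramified principal series, this becomes an integral of $\trace{\pi}{t_w\star\phi(\theta_{-\gamma})\star t_{d}}$ over the tempered principal series; inserting the idempotent $t_d$ and computing the convolution in $K(\Bb^\vee\times\Bb^\vee/G^\vee\times\Gm)$ via Borel--Weil--Bott (this is where the hypothesis that the dual class is a shifted line bundle is used, and where $(-1)^{\ell(w_0)+n(d')}$ and the weight $\gamma-x_u-y_{u'}-2\rho$ appear) reduces everything to $\int V(\lambda)\overline{V(\gamma-x_u-y_{u'}-2\rho)}\,d\pi$.

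Second, you misattribute the origin of the factor $\sum_i\dim\HomOver{G^\vee}{-}{-\otimes\Oo(\mathcal{N}^\vee)_i}\bq^{-i}$: it is not the character of the $K$-theory class representing $t_w$. It arises from the Plancherel density on the principal series, i.e.\ from recognizing the integral above as the Macdonald inner product of two Schur functions and invoking the Nelson--Ram formula expressing that inner product through generalized exponents. Without this identification (or an equivalent), your outline never produces the $\Oo(\mathcal{N}^\vee)_i$ multiplicities, and the a priori shape statement $\bq^{-\ell(\gamma)/2}\cdot(\text{polynomial})/P_W(\bq)$ from the denominator bound in the earlier paper is far too weak to pin down the numerator.
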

%
%\todo[inline]{have to add in $n(u), n(u')$ signs for the shifts in the bundles}
%
\begin{rem}
Of course, $\Oo(\mathcal{N}^\vee)$ depends only on $G^\vee/Z(G^\vee)$, and the multiplicity in \eqref{tdJtd' dominant formula denom} is zero unless $V(\lambda)$ and $V(\gamma-x_u-y_{u'}-2\rho)$ have the same central character. Together with the sign $(-1)^{\ell(\omega(\gamma)_f)}$, this matches the formulas of \cite{Plancherel} and explains Remark 3.8 of \textit{op. cit.}. 
\end{rem}

\begin{rem}
When $\gamma$ is too close to a wall, it will be clear from the proof that the signs in Theorem \ref{thm K theory formulas} can change. For instance when $\gamma=0$, both signs disappear. (C.f. \cite[(3.3)]{Plancherel}.)
\end{rem}

In the completion $K(\pt/ G^\vee)((\bq))$ of $K(\pt/G^\vee\times\Gm)$, we have $R\Gamma(\Oo(\mathcal{N}^\vee))=R\Gamma(\mathcal{K}^{-1})$, where $\mathcal{K}=\prod_{\alpha\geq 0}(1-\bq\Oo(\alpha))$ is the class of the Koszul complex for $\Bb^\vee\into\Nn^\vee$. Hence the values \eqref{tdJtd' dominant formula denom} are closely related to the $q$-analogue of the Kostant partition function. This is unsurprising as our computation boils down to the action of the spherical Hecke algebra on the $K$-invariants of unramified principal series and the connection of the latter to the functions $c_\lambda\in \mathcal{S}^K$ of \cite[\S 3.12]{BKBasicAffine}. As the values in \eqref{tdJtd' dominant formula denom} are essentially Macdonald inner products of Schur functions, we also obtain
\begin{cor}
\label{cor symm and pos dc0}
Let $\mathcal{L}(d)$ be the left descent set of $d$.
Under the hypotheses of Theorem \ref{thm K theory formulas}, we have
\begin{enumerate}
\item[(a)] 
\[
\bq^{\frac{-\ell(\gamma)+\ell(\lambda+x_{u}+y_{u'}+2
\rho)}{2}}t_\lambda(\varpi^\gamma)=t_{\gamma-x_u-y_{u'}-2\rho}(\varpi^{(\lambda+x_u+y_{u'}+2\rho)});
\]
\item[(b)]
Moreover, writing 
\[
t_w=\sum_{x}a_{x,w}T_x,~~~t_w\in t_{w_0}Jt_{w_0},
\]
the $a_{x,w}\in \Z[\bq^{-1}]$ have positive coefficients for $x=x_f\gamma$ with $x_f\in\bangles{\mathcal{L}(d)}$ and $\gamma$ a sufficiently dominant translation element, up to a controlled sign. That is,
\[
(-1)^{\ell(\omega(\gamma)_f)+\ell(w_0)+n(d')}
a_{x,w}\in\frac{1}{P_{W_f}(\bq)}\N[\bq^{-1}].
\]
\end{enumerate}
\end{cor}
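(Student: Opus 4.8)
The plan is to read off both statements directly from Theorem \ref{thm K theory formulas}, interpreting the multiplicity $\dim\HomOver{G^\vee}{V(\lambda)}{V(\gamma-x_u-y_{u'}-2\rho)\otimes\Oo(\mathcal{N}^\vee)_i}$ as a graded branching number and using the symmetry of Macdonald's inner product on symmetric functions. For part (a), I would first reorganize \eqref{tdJtd' dominant formula denom} by absorbing the power of $\bq$ into the left-hand side. Writing $\mu = \gamma - x_u - y_{u'} - 2\rho$, the quantity $\sum_i \dim\HomOver{G^\vee}{V(\lambda)}{V(\mu)\otimes\Oo(\mathcal{N}^\vee)_i}\bq^{-i}$ is (up to normalization) the Macdonald inner product $\langle s_\lambda, s_\mu \rangle_{\bq^{-1}}$ appearing in the theory of Hall--Littlewood polynomials, via the identification with the functions $c_\lambda \in \mathcal{S}^K$ recalled from \cite[\S 3.12]{BKBasicAffine}. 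The key input is then the manifest symmetry of this inner product under $\lambda \leftrightarrow \mu$. Concretely, $\dim\HomOver{G^\vee}{V(\lambda)}{V(\mu)\otimes\Oo(\mathcal{N}^\vee)_i}$ and $\dim\HomOver{G^\vee}{V(\mu)}{V(\lambda)\otimes\Oo(\mathcal{N}^\vee)_i}$ agree because $\Oo(\mathcal{N}^\vee)$ is self-dual as a graded $G^\vee$-representation (its character is $\prod_{\alpha>0}(1-\bq e^\alpha)^{-1}\cdot(\text{Weyl denominator stuff})$, symmetric under $w_0$), so that swapping the roles of $\lambda$ and $\mu$ corresponds exactly to swapping the roles of the pairs $(d, u)$ and $(d', u')$, and the translation element $\gamma$ with the weight $\lambda + x_u + y_{u'} + 2\rho$. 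Applying Theorem \ref{thm K theory formulas} twice — once to $t_\lambda(\varpi^\gamma)$ and once to $t_{\gamma - x_u - y_{u'} - 2\rho}(\varpi^{\lambda + x_u + y_{u'} + 2\rho})$ — and comparing, the two sides agree after the stated power of $\bq$ is cleared, provided the sign prefactors match; checking that $(-1)^{\ell(\omega(\gamma)_f) + \ell(w_0) + n(d')}$ transforms correctly under the swap is a bookkeeping step I would carry out using the description of $\omega(\gamma)_f$ and $n(\cdot)$ from the proof of the theorem.

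For part (b), I would specialize to $d = d' = w_0$ (so $t_w \in t_{w_0} J t_{w_0}$) and take $x = x_f \gamma$ with $x_f \in \bangles{\mathcal{L}(w_0)} = W_f$ and $\gamma$ sufficiently dominant. The coefficient $a_{x,w}$ is, by definition, the coefficient of $T_{x_f\gamma}$ in $t_w$; I would relate this to the value $t_w(I\varpi^\gamma I)$ via the standard unfolding of the matrix coefficient into the $I$-double-coset basis, summing over $x_f \in W_f$ with appropriate $\bq$-powers — this is where the denominator $P_{W_f}(\bq)$ naturally enters, since $\sum_{x_f \in W_f} \bq^{\ell(x_f)}$ appears. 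The positivity then reduces to the positivity of $\sum_i \dim\HomOver{G^\vee}{V(\lambda)}{V(\gamma - x_u - y_{u'} - 2\rho)\otimes\Oo(\mathcal{N}^\vee)_i}\bq^{-i}$, which is clear since each coefficient is the dimension of a Hom-space, hence a nonnegative integer. Thus $(-1)^{\ell(\omega(\gamma)_f) + \ell(w_0) + n(d')} a_{x,w} \in \frac{1}{P_{W_f}(\bq)}\N[\bq^{-1}]$, once the sign is factored out exactly as in \eqref{tdJtd' dominant formula denom}.

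The main obstacle I anticipate is \emph{not} the positivity — that is immediate from the Hom-space interpretation — but rather the precise passage between the matrix coefficient $t_w(I\varpi^\gamma I)$ and the structure constant $a_{x,w}$ for $x = x_f\gamma$ ranging over the whole coset $W_f \gamma$. One must verify that the contributions of the various $x_f \in W_f$ assemble into exactly the factor $P_{W_f}(\bq)^{-1}$ and do not introduce extra $x_f$-dependent corrections to the numerator; this requires knowing that $t_w$, restricted to the relevant double cosets, is supported in a way compatible with the $W_f$-symmetry, which should follow from the fact that $\gamma$ is sufficiently dominant (so that $I\varpi^\gamma I$ lies deep in the dominant chamber and the length-additivity $\ell(x_f \gamma) = \ell(x_f) + \ell(\gamma)$ holds). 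A secondary obstacle is pinning down the sign: the exponent $n(d')$ and the parity $\ell(\omega(\gamma)_f)$ must be tracked carefully through the swap in part (a) and through the coset sum in part (b), but since both are explicitly controlled in the proof of Theorem \ref{thm K theory formulas}, this is routine rather than conceptual. I would also remark, following the second \textbf{Remark} after the theorem, that for $\gamma$ near a wall the sign can degenerate, so the hypothesis ``$\gamma$ sufficiently dominant'' is essential in (b).
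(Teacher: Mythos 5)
Part (a) of your proposal is essentially the paper's argument: the identity is read off from the symmetry of the Macdonald/Hall--Littlewood inner product appearing in \eqref{eqn Macdonald inner product} (equivalently, self-duality of each graded piece $\Oo(\mathcal{N}^\vee)_i$), and your sign bookkeeping is the same routine check the paper leaves implicit.

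For part (b) there is a genuine gap in the step you yourself flag as the main obstacle. First, a conceptual slip: $a_{x,w}$ \emph{is} the value $t_w(IxI)$, since $T_x$ is the indicator function of $IxI$; there is no ``unfolding'' or summation over $x_f\in W_f$ needed to pass between them, and the factor $P_{W_f}(\bq)^{-1}$ does not arise from such a sum --- it is already present in \eqref{tdJtd' dominant formula denom} as the Plancherel normalization $\mu_I(K)^{-1}=P_{W}(q)^{-1}$. What actually has to be shown is that $a_{x_f\gamma,w}=a_{\gamma,w}$ for all $x_f\in\bangles{\mathcal{L}(d)}$, so that the theorem's formula (proved only for translation elements $\varpi^\gamma$) covers the whole coset. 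Your proposed justification --- length-additivity $\ell(x_f\gamma)=\ell(x_f)+\ell(\gamma)$ for $\gamma$ deep in the dominant chamber --- does not give this: length-additivity is a statement about the product $T_{x_f}T_\gamma$, not about the values of the fixed function $t_w$ on the distinct double cosets $Ix_f\varpi^\gamma I\subset \mathcal{P}\varpi^\gamma I$. The ingredient the paper uses is structural: by \cite[\S 4]{Plancherel}, any $j\in t_dJ_0t_d$ is $\mathcal{P}\times I$-invariant for $\mathcal{P}$ the parahoric subgroup attached to $\bangles{\mathcal{L}(d)}$ (for $d=w_0$ this is $K\times I$-invariance), whence the function is constant on $\mathcal{P}\varpi^\gamma I=\bigsqcup_{x_f}Ix_f\varpi^\gamma I$ and (b) follows from the Hom-space positivity exactly as you say. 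Without importing that invariance (or reproving it), your argument for (b) does not close.
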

The symmetry in (a) resembles slightly the symmetry of Macdonald polynomials.

In general, the class dual to some $\Oo(x_{u'})$ does not seem to be represented by a line bundle. However, this is always the case for at least one distinguished involution.
\begin{cor}
\label{cor tw0 Jtdc0}
The hypotheses of Theorem \ref{thm K theory formulas} hold for $d'=d_{\cc_0}$ the canonical distinguished involution in $\cc_0$. 
\begin{enumerate}
\item 
For $t_w\in t_{d_{\cc_0}}Jt_{d_{\cc_0}}$, Corollary \ref{cor symm and pos dc0} becomes
\[
t_{w}(I\varpi^{\gamma}I)=
(-1)^{\ell(\omega(\gamma)_f)}\frac{\bq^{-\frac{\ell(\gamma)}{2}}}{P_W(\bq)}
\sum_{i}\dim\HomOver{G^\vee}{V(\lambda)}{V(\gamma)\otimes\Oo(\mathcal{N}^\vee)_i}\bq^{-i}
\]
and
\[
\bq^{\frac{-\ell(\gamma)+\ell(\lambda)}{2}}t_\lambda(\varpi^\gamma)=t_{\gamma}(\varpi^{\lambda});
\]
\item
For $t_w\in t_{w_0} Jt_{d_{\cc_0}}$, we obtain 
\begin{enumerate}
\item 
$t_w$ is $K\times I$-invariant, so is determined by its values on translation elements;
\item
For $\gamma$ dominant and not on a wall, if $w$ corresponds to $\lambda$, we have
\[
t_{w}(\varpi^{\gamma})
=
(-1)^{\ell(\omega(\gamma)_f)}\frac{\bq^{-\frac{\ell(\gamma)}{2}}}{P_W(\bq)}
\sum_{i}\dim\HomOver{G^\vee}{V(\lambda)}{V(\gamma-\rho)\otimes\Oo(\mathcal{N}^\vee)_i}\bq^{-i};
\]
\item
For $\gamma$ dominant and $x\in W$ a product of commuting simple reflections, we have 
\[
t_\lambda\left(\varpi^{x(\gamma)}\right)=(-\bq)^{\ell(x)}t_\lambda(\varpi^x).
\]
\end{enumerate}
\end{enumerate}
\end{cor}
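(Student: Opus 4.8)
The plan is to pin down the distinguished involution $d_{\cc_0}$, together with the weights $x_{u'},y_{u'}$ and the shift $n(u')$ attached to it, and then read off everything from Theorem~\ref{thm K theory formulas} and Corollary~\ref{cor symm and pos dc0}. By Shi's parametrization \cite{Shi}, together with Lusztig's and Xi's description of the based ring $J_{\cc_0}$ of the lowest cell, the \emph{canonical} distinguished involution is the one attached to the identity of $W$, equivalently the one for which $t_{d_{\cc_0}}Jt_{d_{\cc_0}}$ is the ``spherical'' corner isomorphic to $R(G^\vee)$; write $u'=e$ for its Shi parameter. In the normalization of Steinberg's basis used here one has $x_e=0$, so $\Oo(x_{u'})=\Oo$ is the trivial line bundle on $G^\vee/B^\vee$, and the only thing to be checked in order to invoke Theorem~\ref{thm K theory formulas} for $d'=d_{\cc_0}$ is that the class dual to $[\Oo]$ with respect to \eqref{eqn tor pairing} is a shifted line bundle.

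This verification is the heart of the matter, and the step I expect to be the main obstacle. Since \eqref{eqn tor pairing} is (essentially) the $R(\widetilde{G^\vee})$-valued Euler/Tor pairing $\langle\Ff,\Gg\rangle=\sum_i(-1)^iH^i(G^\vee/B^\vee,\Ff\otimes\Gg)$ on $K(G^\vee/B^\vee/\widetilde{G^\vee})$, Serre duality on $G^\vee/B^\vee$ identifies the class dual to $[\Oo]$ with the canonical bundle placed in top cohomological degree, i.e.\ with $\Oo(-2\rho)$ (possibly twisted by a character of $\widetilde{G^\vee}$) shifted by $\dim G^\vee/B^\vee=\ell(w_0)$. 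Concretely one checks, using Bott's theorem, that $\langle\Oo(x_v),\Oo(-2\rho)\rangle=\chi(G^\vee/B^\vee,\Oo(x_v-2\rho))$ vanishes for $v\neq e$ and equals $(-1)^{\ell(w_0)}$ for $v=e$; the combinatorial input is that $x_v-2\rho+\rho$ is $W$-regular exactly when $v=e$. Hence the hypothesis of Theorem~\ref{thm K theory formulas} holds for $d'=d_{\cc_0}$, with $y_{u'}=-2\rho$ and $n(u')\equiv\ell(w_0)\pmod 2$; for $u'\neq e$ no line-bundle representative exists, which is the general failure referred to in the text. Alongside this one has to track Steinberg's weights and Shi's parameters for the distinguished involutions that appear below.

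Granting this, part~(1) is a direct substitution: putting $x_u=x_{u'}=0$, $y_{u'}=-2\rho$ and $n(d')\equiv\ell(w_0)$ into \eqref{tdJtd' dominant formula denom} collapses $V(\gamma-x_u-y_{u'}-2\rho)$ to $V(\gamma)$ and makes the sign $(-1)^{\ell(w_0)+n(d')}$ trivial, while the statement of Corollary~\ref{cor symm and pos dc0}(a) becomes the displayed symmetry because $\ell(\lambda+x_u+y_{u'}+2\rho)=\ell(\lambda)$. For part~(2), first note that $w_0\in W$, viewed inside $\Waff$, is itself a distinguished involution of $\cc_0$, and its Shi parameter $u$ has Steinberg weight $x_u=\rho$; hence Theorem~\ref{thm K theory formulas} applied to $d=w_0$, $d'=d_{\cc_0}$ gives $V(\gamma-x_u-y_{u'}-2\rho)=V(\gamma-\rho)$, which is (2)(b). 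For (2)(a): right $I$-invariance is built into the realization of $t_w$ inside the completed Iwahori--Hecke algebra, while left $K$-invariance holds because the elements $w$ occurring in $t_{w_0}Jt_{d_{\cc_0}}$ all have left descent set $\mathcal L(w)=\mathcal L(w_0)$, the set of simple reflections of the finite Weyl group; consequently $t_w$ (equivalently $\phi^{-1}(t_w)$, up to the harmless involution ${}^\dagger$) is a left $\bq$-eigenvector for the corresponding $T_s$ and hence fixed by the idempotent $e_K\propto C'_{w_0}$. Since $K\backslash G(F)/I$ is in bijection with $X$ via $\mu\mapsto K\varpi^{\mu}I$, such a $t_w$ is then determined by its values on translation elements. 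Finally (2)(c): using that $x\in W\subset K$ gives $K\varpi^{x(\gamma)}I=K\varpi^{\gamma}x^{-1}I$, one compares $t_w(\varpi^{x(\gamma)})$ with $t_w(\varpi^{\gamma})$ via right convolution by $T_{x^{-1}}$; since $x$ is a product of commuting simple reflections, lengths add with no cross terms and each reflection contributes a factor $-\bq$, giving $(-\bq)^{\ell(x)}$ --- this can also be read off from the behaviour of the functions $c_\lambda$ of \cite[\S3.12]{BKBasicAffine} under $W$. The usual care with signs near walls, as in the Remarks following Theorem~\ref{thm K theory formulas}, applies throughout.
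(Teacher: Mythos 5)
The central step of your argument --- verifying the hypothesis of Theorem \ref{thm K theory formulas} for $d'=d_{\cc_0}$, which you correctly identify as ``the heart of the matter'' --- is wrong on both counts. First, the canonical distinguished involution does \emph{not} correspond to the trivial Steinberg weight: in the paper's conventions (see the $\SL_2$ example, where $t_{s_a}=t_{d_{\cc_0}}$ maps to $-V(\lambda)\,\Oo(-1)\boxtimes\Oo(-1)$ while $t_{w_0\lambda}$ maps to $V(\lambda)\,\Oo\boxtimes\Oo$) it is $w_0$ that carries the weight $0$, and $d_{\cc_0}$ that carries $-\rho$. Second, and more seriously, the Serre-duality claim that $[\Oo]^*=\Oo(-2\rho)[\ell(w_0)]$ is false: the required vanishing $\chi\bigl(\Oo(x_v-2\rho)\bigr)=0$ for $v\neq e$ fails because $x_v-\rho$ need not be singular. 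Already for $\SL_2$ one has $\chi(\Pp^1,\Oo(-3))=-V(1)\neq 0$, and a direct Gram-matrix computation there gives $[\Oo]^*=\Oo$, not $\Oo(-2)[1]$. The correct mechanism (Lemma \ref{Lem dual classes}) is that $-\rho$ is the \emph{unique regular} Steinberg weight, so $R\Gamma(\Oo(x_v-\rho))=0$ for every other $v$ since $x_v$ itself is then singular, whence $[\Oo(-\rho)]^*=\Oo(-\rho)[\ell(w_0)]$, i.e.\ $y_{d_{\cc_0}}=-\rho$ and $n(d_{\cc_0})=\ell(w_0)$. Your final formulas in parts (1) and (2)(b) come out right only because your two errors cancel in the combination $x_u+y_{u'}$ (you use $0+(-2\rho)$ and $\rho+(-2\rho)$ where the paper has $(-\rho)+(-\rho)$ and $0+(-\rho)$); the derivation as written does not establish the first assertion of the corollary, which is its main content.

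The remaining parts are closer to the mark. For (2)(a) your descent-set/idempotent argument is essentially the paper's (which cites the $\mathcal{P}\times I$-invariance from the earlier work with $\mathcal{P}=K$ since $\mathcal{L}(w_0)=S_f$). For (2)(c) your outline --- conjugate $\varpi^{x(\gamma)}$ into $\varpi^{\gamma}x^{-1}$ and track right convolution by $T_{x^{-1}}$ --- is the right shape, but the assertion that ``each reflection contributes a factor $-\bq$'' is exactly the nontrivial Demazure--Lusztig computation of Lemma \ref{lem DL ops}: the factor $-1$ comes from $\Phi(T_s)\star\Oo=-\Oo$ on the right slot and the factor $\bq$ from $R\Gamma\bigl(\Oo(-\rho)\otimes(\Phi(T_s)\star\Oo(\lambda))\bigr)=\bq\,R\Gamma(\Oo(\lambda-\rho))$ for $\pair{\lambda}{\alpha^\vee}\leq 0$, applied inductively along a reduced word. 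You would need to supply this; it does not follow from length additivity alone, since $t_w$ is only right $I$-invariant.
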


\subsection{A second positive basis of $t_d Jt_{d'}$}
The affine Hecke algebra is equipped with two canonical bases in addition to its standard basis $\sett{T_w}_{w\in \Waff}$. On one hand, there is the Kazhdan-Lusztig basis $\sett{C'_w}$. This basis encodes geometric information about the affine flag variety, in both the support and the values of $C'_w$ as a function on $I\rquotient G(F)/I$. On the other hand, the elements of Bernstein's basis $\sett{T_w\theta_\lambda}_{w\in W, \lambda\in X_*}$ of $H$ are adapted to spectral questions: it is convenient to build modules and compute traces via the Bernstein presentation. Moreover, the constructions are naturally in terms of the Langlands dual group $G^\vee$.
Lusztig conjectured in \cite[\S 10]{affineIV} a
version of the Bernstein presentation for $J$. His conjecture was proven in special cases in \cite{XiLowIII, XiTypeA}, and in general in \cite{BO}. However, this presentation did not come with a second basis of $J$. Rather, the $t_w$ basis is already nicely adapted to this presentation, \textit{e.g.} via \eqref{eqn based ring iso}, despite its construction using essentially the $C'_w$-basis. 

The proof of Theorem \ref{thm K theory formulas} uses this asymptotic Bernstein presentation essentially, and a byproduct gives the existence of a second basis any subring $t_dJt_{d'}$ to which the theorem applies. 
\begin{cor}
\label{cor KL}
Suppose that $t_d Jt_{d'}$ for $d,d'\in\cc_0$ satisfies the hypotheses of Theorem \ref{thm K theory formulas}. Then there is a basis $f_\mu$ of  $t_d Jt_{d'}$ consisting of functions given by 
\[
f_\mu(I\varpi^{\gamma} I)
=
\frac{1}{P_W(\bq)}
\bq^{-\ell(\gamma)}
(-1)^{\ell(\omega(\gamma)_f)}
P_{\mu,\gamma-x_d-y_{d'}-2\rho}(\bq),
\]
for $\gamma$ sufficiently dominant, where the $P_{\mu, \lambda}$ are spherical Kazhdan-Lusztig polynomials. For $d=w_0$, $d'=d_{\cc_0}$, these functions satisfy the symmetry property of Corollary \ref{cor tw0 Jtdc0}, 2(c).
\end{cor}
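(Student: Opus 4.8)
The plan is to extract the basis $f_\mu$ from the formula \eqref{tdJtd' dominant formula denom} by diagonalizing the representation-theoretic content of Theorem \ref{thm K theory formulas}. The right-hand side of \eqref{tdJtd' dominant formula denom}, stripped of its sign and its power of $\bq$, computes (as $i$ varies) the graded multiplicity
\[
\sum_i \dim\HomOver{G^\vee}{V(\lambda)}{V(\gamma - x_u - y_{u'} - 2\rho)\otimes\Oo(\mathcal{N}^\vee)_i}\,\bq^{-i}.
\]
As noted in the discussion following the theorem, via the Kostant $q$-partition function and the connection to $c_\lambda\in\mathcal{S}^K$ of \cite[\S3.12]{BKBasicAffine}, this quantity is (up to normalization) a Macdonald/Hall–Littlewood inner product, hence expressible in terms of the spherical Kazhdan–Lusztig polynomials $P_{\mu,\nu}(\bq)$ — these are precisely the entries of the transition matrix between Hall–Littlewood polynomials and Schur functions, equivalently the affine KL polynomials for the pair $(K, I)$. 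So the first step is to rewrite the sum over $i$ in \eqref{tdJtd' dominant formula denom} as a $\Z[\bq^{-1}]$-linear combination $\sum_\mu c^\lambda_\mu(\bq)\, P_{\mu,\gamma - x_d - y_{d'} - 2\rho}(\bq)$, with the coefficients $c^\lambda_\mu(\bq)$ independent of $\gamma$ for $\gamma$ sufficiently dominant (this last point uses the stabilization already built into Theorem \ref{thm K theory formulas}).

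Granting that, define $f_\mu$ by the displayed formula of the corollary and observe that, for each sufficiently dominant $\gamma$, we have $t_w = \sum_\mu c^\lambda_\mu(\bq) f_\mu$ on translation elements $\varpi^\gamma$. Next I would check that $t_dJt_{d'}$ is spanned, as a module over the appropriate completed Laurent ring, by elements supported on translation cosets in the sense needed — this follows from the asymptotic Bernstein presentation used throughout the proof of Theorem \ref{thm K theory formulas}, together with the $K\times I$- (resp. $K$-on-one-side) invariance established in Corollary \ref{cor tw0 Jtdc0}, which reduces everything to values on $\varpi^\gamma$. Since the transition matrix $\bigl(c^\lambda_\mu(\bq)\bigr)$ between the $t_w$ and the $f_\mu$ is unitriangular with respect to the dominance order on weights (the $P_{\mu,\nu}$ themselves form a unitriangular change of basis, and tensoring by $\Oo(\mathcal{N}^\vee)$ preserves this), it is invertible over $\Z[\bq^{-1}]$ after inverting nothing, so $\{f_\mu\}$ is indeed a basis. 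Finally, the symmetry statement for $d = w_0$, $d' = d_{\cc_0}$ is inherited: the argument of Corollary \ref{cor tw0 Jtdc0}, 2(c) — which proves $t_\lambda(\varpi^{x(\gamma)}) = (-\bq)^{\ell(x)} t_\lambda(\varpi^x)$ for $x$ a product of commuting simple reflections — depends only on the functional equation of $P_{\mu,\nu}(\bq)$ under the action of such $x$ on $\nu$, and $f_\mu(I\varpi^\gamma I)$ is built from exactly this polynomial evaluated at $\gamma - x_d - y_{d'} - 2\rho$.

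The main obstacle I expect is the first step: pinning down that the graded Hom-multiplicity in \eqref{tdJtd' dominant formula denom} is genuinely a $\Z_{\geq 0}$- or $\Z$-combination of spherical KL polynomials $P_{\mu,\gamma - x_d - y_{d'} - 2\rho}(\bq)$ with $\gamma$-independent coefficients, rather than merely a single such polynomial up to normalization by $P_W(\bq)$. The cleanest route is probably to recognize $\sum_i\dim\HomOver{G^\vee}{V(\lambda)}{V(\nu)\otimes\Oo(\mathcal{N}^\vee)_i}\bq^{-i}$ as a coefficient in the expansion of the Hall–Littlewood polynomial $P_\lambda(\bq^{-1})$ (or $Q_\lambda$) against Schur functions — essentially Kostka–Foulkes polynomials — and then invoke the identification of Kostka–Foulkes polynomials with affine KL polynomials $P_{\mu,\nu}$ for the affine Grassmannian, which is exactly the content linking \cite{BKBasicAffine} to Lusztig's geometric interpretation. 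Once that dictionary is in place, unitriangularity and $\gamma$-independence of the coefficients are formal, and the rest of the proof is bookkeeping with the signs and $\bq$-powers already present in Theorem \ref{thm K theory formulas}.
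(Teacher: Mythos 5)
Your proposal is correct and follows essentially the paper's route: both arguments amount to changing basis in the spherical Hecke algebra from the Schur/Kazhdan--Lusztig basis (the Satake transforms $V(\lambda)$ indexing the $t_w$) to the Hall--Littlewood/standard basis $\bq^{-\ell(\mu)/2}T_\mu$ inside the Macdonald inner product \eqref{eqn Macdonald inner product}, then invoking $\pair{\bq^{-\ell(\mu)/2}P_\mu}{V(\gamma-x_d-y_{d'}-2\rho)}_{\bq^{-1}}\propto P_{\mu,\gamma-x_d-y_{d'}-2\rho}(\bq)$ (Kostka--Foulkes $=$ spherical KL), with invertibility of the transition matrix --- your unitriangularity step, which the paper gets for free since both families are bases of the spherical Hecke algebra --- giving the basis property. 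The one cosmetic difference is at the end: the symmetry for $d=w_0$, $d'=d_{\cc_0}$ is inherited simply because the identity of Corollary \ref{cor tw0 Jtdc0} 2(c) is linear in $j\in t_{w_0}Jt_{d_{\cc_0}}$ and each $f_\mu$ is a linear combination of the $t_w$; no functional equation of $P_{\mu,\nu}$ under the Weyl group action is needed.
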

The elements of the basis $\sett{f_\mu}_{\mu}$ of $t_dJ_0t_{d'}$ therefore have some restriction on their supports. Note that for $G=\SL_2$, the $t_w$ are all supported everywhere \cite{SL2}, whereas at least $f_\mu(\varpi^{\gamma})=0$ if $\mu\not\leq\gamma-x_d-x_{d'}-2\rho$. 
The $f_\mu$ also have some connection to the geometry of the affine Grassmannian, an object appearing in various categorifications in terms of constructible sheaves.

Existing categorifications of $J$, namely \cite{Propp}, \cite{BKK}, and partially \cite{coh}, are all in terms of coherent sheaves. One also hopes for a categorification of $J$ in terms of constructible sheaves, and an equivalence in the style of Bezrukavnikov's equivalence \cite{tworel}. At least naively, one therefore expects a second basis of $J$ in addition to the spectrally-adapted $t_w$ basis. The bases of the subrings to which Corollary \ref{cor KL} applies may be subsets of the former.

In Section \ref{section rec rel}, we explain that the functions $f_\mu$ all satisfy finite linear recurrences with constant coefficients. This may help with an asymptotic characterization of $J_0$, and is an independent reason for our interest in the $f_\mu$ basis.
\begin{ex}
In type $\tilde{A}_{n}$, we have $P_{\mu, N\varpi_i}(q)=1$ for $i=1,n$ and $N\gg 0$, by \cite{spiral}. Hence in this case the functions $f_\mu$ behave very simply in the corresponding translates of these directions, and in the further directions implied by Corollaries \ref{cor symm and pos dc0} and \ref{cor tw0 Jtdc0}. For instance, we get by Corollary \ref{cor symm and pos dc0} (a) that the functions $t_{\lambda}$ for $\lambda$ an appropriate translate of $N\varpi_i$, $i=1,n$ are given essentially by inverse Kazhdan-Lusztig polynomials. (See the proof of Proposition 3.13 of \cite{Plancherel}.)
\end{ex}

\subsection{Other cells}
The methods of this paper apply in very slight generality beyond the lowest cell. Namely, for $G=\GL_n$, a trick using a coincidence of Plancherel measures gives similar formulas to Theorem \ref{thm K theory formulas} for $J_\cc$, where $\cc$ corresponds to a homogeneous Levi subgroup, but only for values at $1$. The trick fails whenever $M_\cc$ has blocks of different sizes. However, the cancellation implied by the formula
\[
1_J=1_H=T_1=\sum_{d\in\mathcal{D}}t_d
\]
suggests that the other cells might still satisfy similar formulas.

To understand other values would require more knowledge about $\phi_\cc$. For example, \cite{BKK} give coherent realizations of $J_\cc$ for general $\cc$ generalizing \cite{XiLowIII}, but do not describe $\phi_\cc$ in geometric terms. 

In \cite{BKParab}, Braverman-Kazhdan defined parabolic Schwartz spaces $\mathcal{S}(X_P)$ analogous to $\mathcal{S}$, with $\mathcal{S}(X_P)^I$ again having a $K$-theoretic model generalizing $\Bb^\vee=\Nn^\vee\times_{\g^\vee}\sett{0}$. Better understanding of these spaces could be another way of repeating the strategy of the present paper. The difficulty in this perhaps reflects that the discrete and supercuspidal supports of the 
 tempered representations acted upon by other summands $J_\cc$ differ for $\cc\neq \cc_0$.
 
Finally, we check that our formulas are consistent with \cite{SL2}.
\begin{ex}
Let $G=\SL_2$. It is easy to see that the Steinberg basis consists of self-dual elements up to sign, and hence that 
\[
t_{w_0}Jt_{w_0}\into K(\Bb^\vee\times\Bb^\vee/G^\vee\times\Gm)
\]
via 
\[
t_{w_0\lambda}\mapsto V(\lambda)\Oo\boxtimes\Oo
\]
and for the affine simple reflection $s_a=d_{\cc_0}$, 
\[
t_{s_a}J_0t_{s_a}\into K(\Bb^\vee\times\Bb^\vee/G^\vee\times\Gm)
\]
via
\[
t_\lambda\mapsto -V(\lambda)\Oo(-1)\boxtimes\Oo(-1).
\]

For Theorem \ref{thm K theory formulas} (c), recall that $\mathcal{K}=\Oo-\bq\Oo(2)\in K(\Bb/G^\vee\times\Gm)$ is the class of the Koszul complex for $\Bb^\vee\into\Nn^\vee$. Then we have
\[
\Oo(\mathcal{N}^\vee)=\frac{1}{R\Gamma(\mathcal{K})}=1+\bq V(2)+\bq^{2}V(4)+\bq^{3}V(6)+\cdots
\]
so that $\Oo(\mathcal{N}^\vee)_i=V(2i)$. In particular, \eqref{tdJtd' dominant formula denom} is a power of $\bq$, such that 
\begin{equation}
\label{eqn SL2 w0 decay property}
t_{\lambda w_0}(\varpi^{\gamma+\alpha^\vee})=
\bq^{-\ell(\alpha^\vee)}t_{\lambda w_0}(\varpi^{\gamma})
=
\bq^{-2}t_{\lambda w_0}(\varpi^{\gamma})
\end{equation}
for $\gamma$ dominant, in keeping with \cite[Cor. 2.12]{SL2}.

The image of $T_s$ in $K(\Bb\times\Bb/G^\vee\times\Gm)$ is 
\begin{equation}
\label{eqn Ts SL2 image}
(\mathcal{K}\boxtimes\Oo_{\Bb^\vee})\otimes(-\Oo_{\Bb^\vee}(-1)\boxtimes\Oo_{\Bb^\vee}(-1))-\Delta_*\Oo_{\Bb^\vee}
=
-\Oo(-1,-1)+\bq\Oo(2,-1)-\Delta_*\Oo
\end{equation}
by \cite[Lem. 8.11]{BasesI}. Hence we have $\Phi(T_s)\star\Oo_{\Bb^\vee}=-\Oo_{\Bb^\vee}$ and $\Phi({}^\dagger T_s)\star\Oo_{\Bb^\vee}=\bq\Oo_{\Bb^\vee}$, and we recover that $t_{w_0}$ is left $K$-invariant.

In this case we can go beyond the symmetry property in Corollary \ref{cor tw0 Jtdc0}, 2 (c), and recover a symmetry property for $t_{w_0}$. Indeed, by \eqref{eqn Ts SL2 image}, we have for $\gamma$ dominant
\[
\Phi(T_s)\star\Oo(-\gamma)=-R\Gamma\left(\Oo(-\gamma-1)\right)\otimes\Oo(-1)+\bq R\Gamma\left(\Oo(-\gamma-1)\right)\otimes\Oo(1)-\Oo(-\gamma).
\]
Applying $R\Gamma$, we obtain
\begin{align*}
\bq R\Gamma\left(\Oo(-\gamma-1)\right)\otimes V(1)-R\Gamma\left(\Oo(-\gamma)\right)
&=
-\bq V(\gamma+1-2)\otimes V(1)+V(\gamma-2)
\\
&=
-\bq V(\gamma-2)-qV(\gamma)+V(\gamma-2),
\end{align*}
which implies that for $\bq=q>1$, we have, by the Plancherel formula,
\begin{align*}
t_{w_0\lambda}(\varpi^{-\gamma})&=q^{-\ell(\gamma)}\dIntOver{\mathrm{P.S.}}{\mathrm{trace}\left(V(\lambda)\otimes\Oo\boxtimes\Oo\star\Phi({}^\dagger T_\gamma)\right)}{\pi}
\\
&=
q^{-\frac{\ell(\gamma)}{2}}\dIntOver{\mathrm{P.S.}}{\mathrm{trace}\left(V(\lambda)\otimes\Oo\boxtimes\Oo\star\Phi({}^\dagger \theta_\gamma)\star\Oo\boxtimes\Oo\right)}{\pi}
\\
&=
q^{-\frac{\ell(\gamma)}{2}}\dIntOver{\mathrm{P.S.}}{\mathrm{trace}\left(V(\lambda)\otimes\Oo\boxtimes\Oo\star\Phi(T_s\theta_{-\gamma)}T_s^{-1})\star\Oo\boxtimes\Oo\right)}{\pi}
\\
&=
-q^{-\frac{\ell(\gamma)}{2}}\dIntOver{\mathrm{P.S.}}{\mathrm{trace}\left(V(\lambda)\otimes\Oo\boxtimes\Oo\star\Phi(T_s\theta_{-\gamma})\right)}{\pi}
\\
&=
q^{-\frac{\ell(\gamma)}{2}}q\dIntOver{\mathrm{P.S.}}{V(\lambda)V(\gamma-2)}{\pi}
+q^{-\frac{\ell(\gamma)}{2}}q\dIntOver{\mathrm{P.S.}}{V(\lambda)V(\gamma)}{\pi}
-q^{-\frac{\ell(\gamma)}{2}}\dIntOver{\mathrm{P.S.}}{V(\lambda)V(\gamma-2)}{\pi}
\\
&=
-q t_{w_0\lambda}(\varpi^\gamma)
-q^2 t_{w_0\lambda}(\varphi^{\gamma+2})
+t_{w_0\lambda}(\varpi^{\gamma})
\\
&=
-q t_{w_0\lambda}(\varpi^{\gamma}),
\end{align*}
where the integral is over the tempered principal series of $H$, and the cancellation on the last line follows from \eqref{eqn SL2 w0 decay property}.
This symmetry property matches the formulas in \cite[Cor. 2.12]{SL2} and explains them. However, necessity of the cancellation shows that $t_{w_0} Jt_{w_0}$ is more complicated in general than $t_{w_0}Jt_{d_{\cc_0}}$.

To compare with \cite{SL2} for off-diagonal elements in $t_d Jt_{d'}$,  the coincidence of signs in Corollary \ref{cor symm and pos dc0} statements 1 and 2 (b) indicates that one must twist by the automorphism of $K(\Bb\times\Bb/G^\vee)$ induced by swapping the factors. (For example, \cite{Nie} differs from \cite{XiLowIII} by this swap.)
\end{ex} 
\subsection{Acknowledgements} 
The author thanks Roman Bezrukavnikov for helpful discussions and for posing the question that prompted this work, and Per Alexandersson, Alexander Braverman, Arnaud Eteve, Anatole Kirillov, Dmitry Kubrak, Alexandre Minets, Dinakar Muthiah, Arun Ram, David Schwein, Mark Shimozono, and Catharina Stroppel for helpful conversations. This research was partially supported by NSERC, and by the Engineering and Physical Sciences Research Council grant UKRI167 ``Geometry of double loop groups." 

\section{Notation and conventions}
\subsection{The affine and asymptotic Hecke algebras}
\label{subsection K-theoretic realizations}
Let $G$ be as above, and let $\Waff=W_f\ltimes X^*(T)$ be its extended affine Weyl group.
Let $\HH$ be the corresponding affine Hecke 
algebra over $\mathcal{A}=\Z[\bq^{1/2},\bq^{-1/2}]$ with standard basis $\{T_w\}_{w\in\Waff}$ and relations  
$T_wT_{w'}=T_{ww'}$ if $\ell(ww')=\ell(w)+\ell(w')$ and 
$(T_s+1)(T_s-\bq)=0$ for all $s\in S$, where $S\subset\Waff$ is the set of simple reflections and $\ell$ is
the length function on $\Waff$. Let $\sett{C_w}_{w\in\Waff}$ and 
$\sett{C'_w}_{w\in \Waff}$ be the Kazhdan-Lusztig bases of $\HH$ \cite{KL79}, and consider the involution of $\HH$ given by ${}^\dagger T_w=(-1)^{\ell(w_f)}\bq^{\ell(w)}T_{w^{-1}}^{-1}$; it obeys ${}^\dagger C_w=(-1)^{\ell(\omega(x)_f)+\ell(x)}C'_w$ \cite[Lem 1.10]{Plancherel}. 
Here, for $x\in \Waff$, $\omega(x)\in \pi_1(G)$ labels the $W_f\ltimes\Z\Phi^\vee$-coset of $\Waff$ containing $x$ and we write $\omega(x)=\omega(x)_f\omega(x)_t\in W\ltimes X_*$. 

Let $h_{x,y,z}$ be the 
structure constants for the $C_w$-basis. Either Kazhdan-Lusztig basis induces the same notion of cells in $W$: we
say that $x\leq_L y$ if 
$C'_x$ appears in $hC'_y$ for some $h$, and similarly for the relation $x\leq_Ry$. The equivalence classes
induced by the transitive closures of these relations are the left and right cells of $W$. The two sided
cells are given by the coarser relation where $x\leq_{LR}y$ if $x\leq_L y$ or $x\leq_R y$. Recall that each one-sided cell contains a unique distinguished involution \cite[Thm. 1.10]{affineII}, and write $\mathcal{D}$ for the (finite) set of distinguished involutions. Recall also that by \cite{LX}, every two-sided cell $\cc$ contains a unique distinguished involution $d_\cc$ which is shortest in its double coset $W_fd_\cc W_f$.
\begin{dfn}
\label{def canonical}
We say that $d_{\cc_0}$ is the \emph{canonical} distinguished involution in $\cc_0$, following the terminology of \cite{LX}.
The left cell of $\cc_0$ containing $d_{\cc_0}$ is the \emph{canonical left cell} of $\cc_0$.  By the proof of Prop. 4.6 of \cite{XiLowI}, we have $d_{\cc_0}=w_0\ltimes(-2\rho)$.
\end{dfn}
\begin{ex}
If $G=\SL_2$, then $d_{\cc_0}$ is the affine simple reflection in $\Waff$. The canonical left cell in $\cc_0$ consists of all $w$ with reduced expression of the form $w=rd_{\cc_0}$ for some $r\in\Waff$.
\end{ex}
Let $J$ be 
Lusztig's asymptotic Hecke algebra with basis $\{t_z\}$ as in \cite{affineII}, and let $J_0$ be 
its direct summand corresponding to the lowest cell $\cc_0$. Let 
\[
\phi_0(C_w)=\sum_{\substack{d\in \cc_0\\ z\sim_L d}}h_{w,d,z}t_z
\]
be the composition $H\into J[\bq^{\frac{\pm 1}{2}}]\onto J_0[\bq^{\frac{\pm 1}{2}}]$ of Lusztig's homomorphism \cite{affineII} and the projection onto $J_0$. 

Let $\Bb^\vee$ and $\St$ be the flag variety and Steinberg variety of $G^\vee$, respectively.
We identify $\HH\simeq K(\St/G^\vee\times\Gm)$ via the isomorphism of \cite[\S 7]{BasesI} where $\mathcal{A}=K(\Gm\rquotient\pt)$ with $\bq^{1/2}=\id_{\Gm}$, and 
$\Gm$ scales the fibres of $\Nn^\vee$ by $\bq^{-1}$. Then, by \cite[Thm. 2.5]{XiLowIII} and \cite{Nie}, the diagram
\begin{equation}
\label{eqn Xi diagram}
\begin{tikzcd}
J_0[\bq^{\pm\frac{1}{2}}]\arrow[d, hook]&H\simeq K(\St/G^\vee\times\Gm)\arrow[l, hook, "\phi_0" above]\arrow[r, hook, "\Phi"]&K(\Bb\times\Bb/G^\vee\times\Gm)\arrow[d, hook]
\\
\Mat_{\#W_f}(K(\widetilde{G^\vee}\times\Gm\rquotient\pt))\arrow[rr, "\mathrm{Ad}(A)"]&&\Mat_{\#W_f}(K(\widetilde{G^\vee}\times\Gm\rquotient\pt))
\end{tikzcd}
\end{equation}
of algebra morphisms commutes, $\Phi$ is the morphism induced by pushforward followed by restriction along the maps 
\[
\St\into \Nn\times\Bb^\vee\hookleftarrow \Bb^\vee\times\Bb^\vee,
\]
$\widetilde{G^\vee}$ is the universal cover of $G^\vee$ and the vertical maps and the matrix $A$ are as in \cite{XiLowIII}. This diagram therefore identifies 
\[
J_0\otimes\mathcal{A}\overset{\sim}{\to} K(\Bb^\vee\times\Bb^\vee/G^\vee\times\Gm).
\]
The identification works as follows. Recall that, by the Pittie-Steinberg theorem \cite{Steinberg}, $K(\Bb^\vee/\widetilde{G^\vee})$ is a free $K(\pt/\widetilde{G^\vee})$-module of rank $\# W_f$ with an explicit basis of line bundles $\sett{\Oo(x_u)}_{u\in W_f}$, where
\begin{equation}
\label{eqn steinberg weights dfn}
x_u=u\left(\sum_{\substack{\varpi_i \\ u\varpi_i<0}}\varpi_i\right)\in X_*,
\end{equation}
where the $\varpi_i$ range over the fundamental dominant weights.

Moreover, the pairing 
\begin{equation}
\label{eqn tor pairing}
\pair{-}{-}\colon K(\Bb^\vee/ G^\vee)\otimes K(\Bb/G^\vee)\to K(\pt/G^\vee)
\end{equation}
given by $\pair{\Ff}{\Gg}=R\Gamma\left(\Ff\otimes\Gg\right)$ is perfect \cite{KLDeligneLanglands}; let $\sett{[\Oo(x_u)]^*}_{u\in W_f}$ be the dual basis. Then if $t_w\in t_d J_0 t_{d'}$ with $d,d'$ corresponding to $u,u'\in W_f$ and $w$ to a dominant weight $\lambda$ of $\tilde{G^\vee}$ via Shi's parametrization \cite{Shi} of $\cc_0$ as recalled in \cite[\S 1.2]{Nie}, we have 
\begin{equation}
\label{eqn based ring iso}
t_w\mapsto V(\lambda)\otimes\Oo(x_u)\boxtimes[\Oo(x_u)]^*\in J_{G^\vee}(\Bb\times\Bb)\simeq J_0,
\end{equation}
such that the right hand side is $G^\vee$-equivariant, not just $\tilde{G}^\vee$-equivariant. Note that $\Oo(x_u)\boxtimes[\Oo(x_u)]^*$ is equivariant for any member the isogeny class of $G^\vee$, but that $\Oo(x_u)\boxtimes[\Oo(x_{u'})]^*$ for $u\neq u'$ might not be \cite[\S 8.2]{XiTypeA}.
(By \cite{rigid}, some equivariance will fail if and only if there is a reducible unramified tempered principal series representation of $G(F)$.)

In general, the basis $\sett{[\Oo(x_u)]^*}_{u\in W_f}$ does not consist of line bundles. However, we do have
\begin{lem}
\label{Lem dual classes}
\begin{enumerate}
\item[(a)]
The class $[\Oo(-\rho)]$ always belongs to the Steinberg basis. Its dual class is $\left[\Oo(-\rho)\left[\ell(w_0)\right]\right]$.
\item[(b)]
Under \eqref{eqn based ring iso}, $t_{d_{\cc_0}}\mapsto\Oo(-\rho)\boxtimes\Oo(-\rho)[\ell(w_0)]$.
\end{enumerate}
\end{lem}
\begin{proof}
For (a), by construction, all $x_u$ except $-\rho$ lie on walls. This implies the second statement. For (b), it suffices by (a) that $d_{\cc_0}$ corresponds to $u=w_0$ in the parametrization of \cite{Shi}. This is clear from the formula in Definition \ref{def canonical} and the definition \eqref{eqn steinberg weights dfn} of the $x_u$.
\end{proof}
%

%
%We have the isomorphism $t_{w_0}J_0t_{w_0}\simeq K(\pt/G^\vee)$ of based rings compatible with the isomorphism $J_0\simeq K(\Bb^\vee\times\Bb^\vee/G^\vee)$, via the map 
%%
%\begin{equation}
%\label{eqn based ring iso}
%t_w\mapsto (-1)^{\ell(w_0)}V(\lambda)\otimes\Oo(-\rho)\boxtimes\Oo(-\rho)\in J_{G^\vee}(\Bb\times\Bb)\simeq J_0,
%\end{equation}
%%
%where $\lambda$ corresponds to $w$ under Shi's parametrization \cite{Shi} of $\cc_0$ as recalled in \cite[\S 1.2]{Nie}. Note that $\Oo(-\rho,-\rho)$ is $G^\vee$-equivariant for any $G^\vee$ in the isogeny class of the simply-connected group $\widetilde{G^\vee}$.
%
%
%
%
%
%
%According to \textit{op.cit.}, for 
%each $d$, there is $w\in W_f$ such that
%%
%%\begin{equation}
%%\label{eqn J0 K-theory presentation}
%%\psi_1^{-1}\circ\Ad(A)\circ\varphi_1\colon t_dJ_0t_d\overset{\sim}{\to} \sets{V(\lambda)\Ff_w\boxtimes\Gg_w}{V(\lambda)\in\Irr(G)}\subset K(G\rquotient\Bb\times\Bb)
%%\end{equation}
%%
%so that $t_d\mapsto\Ff_w\boxtimes\Gg_w$.
%
%\todo[inline]{H and J recall}
\subsection{The Schwartz space of the basic affine space}
\label{subsection SI}
In \cite{BK}, Braverman-Kazhdan defined an embedding of $J$ into the Harish-Chandra Schwartz space of $G(F)$ and characterized its image. In particular, for $\bq=q$ a prime power one can study elements of $J$ as functions on $G(F)$ via the Plancherel formula and understanding the trace of $j\star T_w$ on tempered representations $\pi=i_P^G(\nu\otimes\omega)$, for $\omega$ a discrete series representation of the Levi subgroup of the parabolic subgroup $P$ of $G$ and $\nu$ an unramified character. The author used this strategy in \cite{Plancherel} to check that the map of \cite{BK} is an embedding. (Although we do not need this, the characterization of \cite{BK} is proven in \cite{BKK} and \cite{rigid}.)

To understand $J_0$, it suffices to consider principal series in the sense of the previous paragraph, by \cite[Thm. 1.8]{BK}. In \cite{BKBasicAffine}, Braverman-Kazhdan-Lusztig defined a space of functions $\mathcal{S}$ with commuting $G(F)$ and $T(F)$-actions, such that taking coinvariants of $\mathcal{S}$ at generic unramified characters of $T(F)$ gave an isomorphism with the corresponding principal series. As the traces of $\pi(j)$ for $\pi$ tempered and $j\in J$ extend to algebraic functions of the unramified character 
\cite[Lem. 2.17]{Plancherel}, we may compute in $\mathcal{S}^I$. This is a convenient model as 
\begin{equation}
\label{eqn basic affine isomorphism}
\mathcal{S}^I\simeq K(\Bb^\vee/T^\vee\times\Gm)|_{\bq=q}
\end{equation}
by \cite[Cor. 5.7]{BKBasicAffine}. 
\begin{rem}
\label{rem Calder}
Although we do not use the $W_f$-action by intertwining isomorphisms on $\mathcal{S}^I$-explicitly, we point out that its correct normalization is given in \cite{Calder}. 
In light of Example 6.3 of \textit{op. cit.} and the fact that $\mathcal{S}^K$ is spanned by $\Oo_{\Bb}$, the matching 
$W_f$-action the right hand side of \eqref{eqn basic affine isomorphism} is the ``dot" version of the action \cite[\S 5.2]{BKBasicAffine}.
\end{rem}

Diagram \eqref{eqn Xi diagram} induces the diagram
%Recall the isomorphism
%We identify $H$ with $K_{G^\vee\times \Gm}(\St)$, the equivariant $K$-theory of the Steinberg variety, where by $K$-theory we always mean the complexified Grothendieck group. We will use the the normalizations of \cite{CG}. Together with the isomorphism $J_0\otimes\mathcal{A}\simeq K(\Bb^\vee\times\Bb^\vee/G^\vee\times\Gm)$ of \cite{XiIII, Nie}, given by 
%\todo[inline]{recall}
%
%
%this gives the commutative diagram
%
%\begin{center}
%\begin{tikzcd}
%H\arrow[rrr, "\phi_0\circ{}^\dagger(-)"]\arrow[d, "\Theta"]&&&J_0\otimes_Z\mathcal{A}\simeq\End_{\Z[\tilde{W}]}(\mathcal{S}^I)\arrow[dd, "\xi"]\arrow[dl, "\mathrm{Th}^{-1}\circ\xi"]
%\\
%K_{G^\vee\times\Gm}(\St)\arrow[rr]\arrow[d, hook, "\psi"]&&\End_{\Z[\tilde{W}]}\left(K\left(\Nn/T^\vee\times\Gm^\times\right)\right)\arrow[dr, "\mathrm{Th}"]&
%\\
%K(\Bb^\vee\times\Bb^\vee/G^\vee\times\Gm)\arrow[rrr]&&&\End_{\Z[\tilde{W}]}\left(K\left(\Bb^\vee/T^\vee\times\Gm\right)\right),
%\end{tikzcd}
%\end{center}
%
\begin{center}
\begin{tikzcd}
K_{G^\vee\times\Gm}(\St)=\HH\arrow[r]\arrow[d, hook, "\Phi"]&\End_{\Z[\tilde{W}]}\left(K\left(\Nn/T^\vee\times\Gm^\times\right)\right)\arrow[d, "\mathrm{Th}"]&
\\
K(\Bb^\vee\times\Bb^\vee/G^\vee\times\Gm)\simeq J_0\otimes\mathcal{A}\arrow[r]&\End_{\Z[\tilde{W}]}\left(K\left(\Bb^\vee/T^\vee\times\Gm\right)\right),
\end{tikzcd}
\end{center}
where $\mathrm{Th}$ is the Thom isomorphism \cite[Thm. 5.4.17]{CG}.
%where $\Phi$ is the pushforward by $\St\into\Bb^\vee\times\Nn^\vee$ followed by pullback by the zero section of $\Bb^\times\Nn^\vee\onto\Bb^\vee\times\Bb^\vee$, the horizontal maps are induced by the obvious convolution actions, and  The $W$-action on $K(\Bb^\vee/T^\vee\times\Gm)$ is defined in 
%\cite[\S 5.2]{BKBasicAffine}.

We identify $K(\Bb^\vee\times\Bb^\vee/G^\vee)\simeq K(\Bb^\vee/T^\vee)$ as $J_0$-modules via restriction to $\Bb^\vee\times\sett{\bB^\vee}$, so that the eigenspace of $\Oo(x_u)\boxtimes[\Oo(x_u)]^*$ is spanned by $\Oo(x_u)\bangles{[\Oo(x_u)]^*}$, where $\bangles{-}$ denotes the action of $K(\pt/T^\vee)$. This gives
\begin{lem}
\label{lem trace}
\begin{enumerate}
\item[(a)]
For any distinguished involution $d\in\cc_0$, and $\bq=q>1$, $t_d$ acts as a rank 1 idempotent on the unitary principal series of $G(F)$ and annihilates all other tempered representations.
\item[(b)]
If $t_w\in t_dJ_0t_d$ corresponds to $\lambda$ under \cite{Shi}, then 
$\trace{\pi}{t_w}=V(\lambda)$ for unramified principal series $\pi$ of $G(F)$, where we view $V(\lambda)$ as a function of the Satake parameter. 
\end{enumerate}
\end{lem}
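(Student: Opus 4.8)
The plan is to unpack the definitions and reduce everything to the action of $J_0$ on principal series, which by the preceding setup is controlled by the $K$-theoretic model \eqref{eqn basic affine isomorphism} and the identification \eqref{eqn based ring iso}. First I would recall that, under the isomorphism $\mathcal{S}^I\simeq K(\Bb^\vee/T^\vee\times\Gm)|_{\bq=q}$, the action of an element of $J_0$ on the principal series $\pi$ attached to a generic unramified character $\nu$ is obtained by localizing the $K(\Bb^\vee/T^\vee)$-module at the point of $\Spec K(\pt/T^\vee)$ corresponding to the Satake parameter $s_\nu$, so that each basis element $\Oo(x_u)\boxtimes[\Oo(x_u)]^*$ of $K(\Bb^\vee\times\Bb^\vee/G^\vee\times\Gm)\simeq J_0\otimes\mathcal{A}$ acts on the fibre $K(\Bb^\vee/T^\vee)\otimes_{K(\pt/T^\vee)}\C_{s_\nu}$, which is free of rank $\#W_f$ with basis the $\Oo(x_v)$, by the rank-one projector onto the line spanned by $\Oo(x_u)$ read off against the dual functional $\bangles{[\Oo(x_u)]^*}$ evaluated at $s_\nu$.

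For part (a), I would argue as follows. The diagram below \eqref{eqn basic affine isomorphism} expresses the $J_0$-action through $\End_{\Z[\tilde W]}(K(\Bb^\vee/T^\vee\times\Gm))$, and $t_d$ for a distinguished involution corresponds under \eqref{eqn based ring iso} to $\Oo(x_u)\boxtimes[\Oo(x_u)]^*$ with $\lambda=0$, i.e. to the idempotent $e_u$ that projects onto $\C\cdot\Oo(x_u)$ inside the rank-$\#W_f$ fibre. So $t_d$ acts as a rank $1$ idempotent on every principal series with generic $\nu$; since these traces are algebraic in $\nu$ by \cite[Lem. 2.17]{Plancherel}, the rank-one-idempotent property persists on the whole unitary principal series by continuity/density. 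That $t_d$ annihilates every other tempered representation is exactly the statement that $J_0$ acts through principal series only, which is \cite{BK}[Thm. 1.8] together with the fact that $t_d\in J_0$; one notes that on a tempered $\pi=i_P^G(\nu\otimes\omega)$ with $\omega$ not the trivial character of a torus, $\pi(j)=0$ for all $j\in J_0$.

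For part (b), $t_w\in t_d J_0 t_d$ corresponds under \eqref{eqn based ring iso} to $V(\lambda)\otimes\Oo(x_u)\boxtimes[\Oo(x_u)]^*=V(\lambda)\cdot e_u$, the operator that acts on the $\Oo(x_u)$-eigenline by multiplication by the class $V(\lambda)\in K(\pt/G^\vee)$ and by zero elsewhere. Taking the trace of this rank-one operator over the fibre at $s_\nu$ gives precisely the value of the character $V(\lambda)$ at the Satake parameter, so $\trace{\pi}{t_w}=V(\lambda)$ as a function of the Satake parameter, as claimed. Here one uses that the diagram under \eqref{eqn basic affine isomorphism} is $\Z[\tilde W]$-linear and that the relevant eigenspace decomposition is as described right after it, so that the tensor factor $V(\lambda)$ really does act as the scalar $\chi_{V(\lambda)}(s_\nu)$ on the $u$-th eigenline.

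The main obstacle is the bookkeeping in part (a): showing the rank-one-idempotent property extends from generic $\nu$ to \emph{all} unitary principal series (including reducible points), and cleanly deducing the vanishing on non-principal-series tempered representations directly from the cited structural results rather than re-proving them. The $K$-theoretic computation in (b) is then essentially a trace of a rank-one operator and should be routine once the identifications are in place; the only care needed is to track the $\Gm$-equivariance (the variable $\bq$) so that one genuinely obtains the character of $V(\lambda)$ rather than a $\bq$-twisted variant.
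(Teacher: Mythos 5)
Your proposal is correct in substance but takes a genuinely different route from the paper. The paper's own proof of part (a) is a one-line citation to \cite[Cor.\ 3.3]{Plancherel}, and part (b) is then deduced from (a); you instead re-derive both statements from the $K$-theoretic model, identifying $t_d$ with the idempotent $\Oo(x_u)\boxtimes[\Oo(x_u)]^*$ and computing its action on the fibre of $K(\Bb^\vee/T^\vee)$ at a Satake parameter. That computation is sound: idempotency follows from $\pair{\Oo(x_u)}{[\Oo(x_u)]^*}=1$, the rank-one statement from the description of the eigenspaces given after \eqref{eqn basic affine isomorphism}, and the trace in (b) is indeed $\chi_{V(\lambda)}(s_\nu)\cdot\tr(e_u)=V(\lambda)$ evaluated at the Satake parameter. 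What your route buys is a self-contained argument that makes visible \emph{why} (b) follows from (a) (the paper leaves this implicit); what it costs is that you are silently re-proving the cited corollary, so you inherit responsibility for all the identifications (the Xi--Nie isomorphism, the Braverman--Kazhdan--Lusztig model, and the compatibility of the two) that the citation packages up.

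One step as written is too weak and should be repaired. You extend the rank-one-idempotent property from generic $\nu$ to all unitary principal series ``by continuity/density,'' but rank is only lower semicontinuous, so an idempotent family of generic rank $1$ could a priori degenerate to $0$ at special points. The correct argument is: $t_d^2=t_d$ holds in $J$ itself (distinguished involutions give the local units), so $\pi_\nu(t_d)$ is an idempotent at \emph{every} $\nu$, reducible or not; for an idempotent the rank equals the trace, and the trace is an algebraic function of $\nu$ by \cite[Lem.\ 2.17]{Plancherel} which equals $1$ on a Zariski-dense set, hence equals $1$ everywhere. With that substitution your argument closes.
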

\begin{proof}
Part (a) is \cite[Cor. 3.3]{Plancherel}. Part (b) then follows from (a).
\end{proof}

\subsubsection{Demazure-Lusztig operators and the Bernstein subalgebra}
%We use the isomorphism $H\simeq K(\St/G^\vee\times\Gm)$ from \cite{BasesI}, in keeping with \cite{XiLowIII}. We also use the definition of $\Phi$ from \textit{loc. cit.}, given by pushforward followed by restriction along the maps 
%%
%\[
%\St\into \Nn\times\Bb^\vee\hookleftarrow \Bb^\vee\times\Bb^\vee.
%\]
%%
%This normalization is different than , but 
%
The analogue of diagram (7.6.20) of \cite{CG} for our normalization gives that the action by Demazure-Lusztig operators is just the natural action of $\Phi(T_s)$ for $s\in S_{\mathrm{fin}}$. In our normalization they are given by \cite[Lem 4.7]{BasesI}, \cite[\S 1.2]{XiLowIII} and read
\[
T_{s_\alpha}\star\Oo(\lambda)=\Phi(T_{s_\alpha})\star\Oo(\lambda)=\frac{\Oo(s(\lambda))-\Oo(\lambda+\alpha)}{\Oo(\alpha)-1}+\bq\frac{\Oo(\lambda+\alpha)-\Oo(s(\lambda)+\alpha)}{\Oo(\alpha)-1}.
\]
\begin{lem}
\label{lem DL ops}
\begin{enumerate}
\item[(a)]
We have
\[
\Phi(T_s)\star\Oo=-\Oo,
\]
so that $\Phi({}^\dagger T_s)=\bq\Oo$.
\item[(b)]
If $x=s_\alpha$ is a simple reflection and $\pair{\lambda}{\alpha^\vee}\leq 0$, then 
\[
R\Gamma\left(\Oo(-\rho)\otimes\left(\Phi(T_s)\star\Oo(\lambda)\right)\right)
=
\bq R\Gamma\left(\Oo(\lambda-\rho)\right).
\]
\end{enumerate}
\end{lem}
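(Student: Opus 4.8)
The plan is to prove both parts by direct computation with the explicit formula for the Demazure–Lusztig operator
\[
T_{s_\alpha}\star\Oo(\lambda)=\frac{\Oo(s(\lambda))-\Oo(\lambda+\alpha)}{\Oo(\alpha)-1}+\bq\frac{\Oo(\lambda+\alpha)-\Oo(s(\lambda)+\alpha)}{\Oo(\alpha)-1},
\]
treating the quotients as honest elements of $K(\Bb^\vee/T^\vee)$ via the identity $\frac{\Oo(\mu)-\Oo(\mu+\alpha)}{\Oo(\alpha)-1}=\Oo(\mu)$ whenever the numerator factors through $1-\Oo(\alpha)$; more generally one expands $\frac{\Oo(s\lambda)-\Oo(\lambda)}{1-\Oo(\alpha)}$ as a geometric-type sum $\Oo(\lambda)+\Oo(\lambda-\alpha)+\cdots+\Oo(s\lambda+\alpha)$ when $\pair{\lambda}{\alpha^\vee}>0$, and with the opposite sign when $\pair{\lambda}{\alpha^\vee}<0$.

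First I would do part (a): set $\lambda=0$. Then $s(\lambda)=0$, $\lambda+\alpha=\alpha$, $s(\lambda)+\alpha=\alpha$, so the first term is $\frac{\Oo-\Oo(\alpha)}{\Oo(\alpha)-1}=-\Oo$ and the second term is $\frac{\Oo(\alpha)-\Oo(\alpha)}{\Oo(\alpha)-1}=0$. Hence $\Phi(T_s)\star\Oo=-\Oo$. For the dagger statement, recall ${}^\dagger T_s=\bq T_s^{-1}$ (from ${}^\dagger T_w=(-1)^{\ell(w_f)}\bq^{\ell(w)}T_{w^{-1}}^{-1}$ with $w=s$), and $T_s^{-1}=\bq^{-1}(T_s+1-\bq)$ from the quadratic relation; applying this to $\Oo$ and using $\Phi(T_s)\star\Oo=-\Oo$ gives $\Phi({}^\dagger T_s)\star\Oo=\bq\,\Oo$. (Alternatively one may invoke ${}^\dagger C_s$ compatibility, but the direct route is cleaner.)

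Next, part (b): with $x=s_\alpha$ a simple reflection and $n:=\pair{\lambda}{\alpha^\vee}\leq 0$, I would plug $\lambda$ into the operator formula. Writing $s(\lambda)=\lambda-n\alpha$ with $-n\geq 0$, the numerator $\Oo(s(\lambda))-\Oo(\lambda+\alpha)$ of the first term expands (as above) into $-\bigl(\Oo(\lambda+\alpha)+\Oo(\lambda+2\alpha)+\cdots+\Oo(s(\lambda))\bigr)$ if $n<0$, and the second term similarly produces $+\bigl(\Oo(\lambda+2\alpha)+\cdots+\Oo(s(\lambda)+\alpha)\bigr)$ times $\bq$; when $n=0$ the first term is $-\Oo(\lambda)$ and the second is $0$, giving the base case directly. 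Now tensor with $\Oo(-\rho)$ and apply $R\Gamma$. The point is that $R\Gamma(\Oo(\mu-\rho))=0$ for every weight $\mu$ appearing in these sums that is singular for the dot-action (i.e.\ $\mu$ fixed by some reflection after the $\rho$-shift), by the Borel–Weil–Bott vanishing built into the pairing \eqref{eqn tor pairing}; and the surviving terms telescope. Concretely, the chain $\lambda,\lambda+\alpha,\dots,\lambda-n\alpha$ is an $s_\alpha$-string of length $|n|+1$ centered so that $R\Gamma(\Oo(\mu-\rho))$ cancels in pairs related by $\mu\mapsto s_\alpha\cdot\mu$ (Bott reflection), leaving exactly the contribution $\bq\,R\Gamma(\Oo(\lambda-\rho))$ after collecting the $\bq$ from the second summand. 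I would organize this as: (i) reduce to checking the identity of classes $\Oo(-\rho)\otimes(\Phi(T_s)\star\Oo(\lambda))$ after $R\Gamma$, (ii) expand via the operator formula, (iii) apply Bott cancellation termwise.

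The main obstacle I anticipate is bookkeeping in step (iii): making the telescoping of the $s_\alpha$-string precise, tracking which boundary terms of the two sums survive, and confirming the surviving term carries the single factor of $\bq$ and equals $R\Gamma(\Oo(\lambda-\rho))$ rather than a shift of it — in particular handling the endpoint $n=0$ and checking no sign is lost. A clean way to sidestep much of this is to note that $\Phi(T_s)=\bq\,D_s-\Id$ where $D_s$ is the (idempotent) Demazure operator, so $\Phi(T_s)\star\Oo(\lambda)=\bq\,D_s\Oo(\lambda)-\Oo(\lambda)$; then $R\Gamma(\Oo(-\rho)\otimes D_s\Oo(\lambda))=R\Gamma(\Oo(-\rho)\otimes\Oo(\lambda))$ for dominant-on-$\alpha$ or, in our case $n\leq 0$, one uses instead that $D_s\Oo(\lambda)=\Oo(s\cdot\lambda$-type class with $R\Gamma(\Oo(-\rho)\otimes D_s\Oo(\lambda))=R\Gamma(\Oo(\lambda-\rho))$ by BWB, while $R\Gamma(\Oo(\lambda-\rho))$ from the $-\Id$ part is killed when $\lambda-\rho$ is singular — and when $n=0$, $\pair{\lambda}{\alpha^\vee}=0$ forces $\lambda-\rho$ to lie on the $\alpha$-wall so $R\Gamma(\Oo(\lambda-\rho))=0$, leaving exactly $\bq\,R\Gamma(\Oo(\lambda-\rho))$; for $n<0$ a parallel BWB computation applies. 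Either route reduces (b) to Borel–Weil–Bott vanishing, which is the conceptual heart.
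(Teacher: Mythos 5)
Your overall route is the same as the paper's: part (a) by direct evaluation of the Demazure--Lusztig formula at $\lambda=0$ (plus the quadratic relation for the $\dagger$ statement), and part (b) by expanding $\Phi(T_s)\star\Oo(\lambda)$ into $\alpha$-strings of line bundles, tensoring with $\Oo(-\rho)$, and cancelling via Borel--Weil--Bott using $s\cdot(\lambda+k\alpha-\rho)=s(\lambda)-k\alpha-\rho$. That key cancellation idea, and your treatment of the $n=0$ endpoint, are exactly right. However, the ``bookkeeping'' you defer is precisely where your expansions are currently wrong, in both sign and range: for $n=\pair{\lambda}{\alpha^\vee}<0$ the non-$\bq$ term is $+\bigl(\Oo(\lambda+\alpha)+\cdots+\Oo(s(\lambda)-\alpha)\bigr)$ (not a minus sign, and the string stops at $s(\lambda)-\alpha$), and this string is carried to itself by $\mu\mapsto s\cdot\mu$ after the $\rho$-shift, so it contributes nothing; the $\bq$-term is $-\bq\bigl(\Oo(\lambda+\alpha)+\cdots+\Oo(s(\lambda))\bigr)$ (not $+\bq$, and it runs from $\lambda+\alpha$ to $s(\lambda)$), in which the only unpaired weight is $s(\lambda)$, giving $-\bq\,R\Gamma(\Oo(s(\lambda)-\rho))=\bq\,R\Gamma(\Oo(\lambda-\rho))$ by the Bott sign flip. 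With those corrections the telescoping is not delicate at all, and your alternative detour through the Demazure operator $D_s$ is unnecessary.
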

\begin{proof}
The first statement follows from the definition in  \cite[Lem. 4.7]{BasesI}.

For the second statement, the non-$\bq$ term in $\Phi(T_s)\star\Oo(\lambda)$ is 
\[
\Oo(\lambda+\alpha)+\Oo(\lambda+2\alpha)+\cdots+\Oo(s(\lambda)-\alpha)).
\]
This term contributes no cohomology, as we have
\begin{equation}
\label{eqn s dot action DL}
s\cdot(\lambda+k\alpha-\rho)=s(\lambda)-k\alpha-\rho
\end{equation}
for $k\geq 1$. The $\bq$-term is
\[
-\Oo(\lambda+\alpha)-\Oo(\lambda+2\alpha)-\cdots-\Oo(s(\lambda)).
\]
We claim that 
\[
R\Gamma\left(\Oo(-\rho)\otimes\left(-\Oo(\lambda+\alpha)-\Oo(\lambda+2\alpha)-\cdots-\Oo(s(\lambda))\right)\right)=
R\Gamma(\Oo(-\rho+\lambda)).
\]
Indeed, using \eqref{eqn s dot action DL} again, we see that only $\Oo(s(\lambda)-\rho)$ contributes cohomology. Letting $x$ be such that $x(\lambda)$ is dominant, we get $\ell(xs)=\ell(x)-1$, and
\[
-R\Gamma\left(\Oo(s(\lambda)-\rho\right)=-(-1)^{\ell(xs)}V(x(\lambda)-\rho)=R\Gamma\left(\Oo(\lambda-\rho)\right).
\]
\end{proof}

Finally, we recall
\begin{lem}[\cite{CG}, Prop. 7.6.29]
\label{lem Lusztig map on Bernstein}
We have $\Phi(\theta_\lambda)=\Delta_*\Oo(\lambda)$ for $\Delta\colon\Bb^\vee\to\Bb^\vee\times\Bb^\vee$ the diagonal inclusion.
\end{lem}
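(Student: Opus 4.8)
The plan is to deduce the statement from the geometric description of the Bernstein elements inside $K(\St/G^\vee\times\Gm)$ and then to unwind the two steps defining $\Phi$. Under the Chriss--Ginzburg isomorphism $\HH\simeq K(\St/G^\vee\times\Gm)$, the element $\theta_\lambda$ is represented by $\iota_*\bigl(\pi^*\Oo_{\Bb^\vee}(\lambda)\bigr)$, where $\iota\colon\Nn^\vee\into\St=\Nn^\vee\times_{\g^\vee}\Nn^\vee$ is the diagonal and $\pi\colon\Nn^\vee\to\Bb^\vee$ is the projection: this is the content of \cite[Prop.~7.6.29]{CG}, since the operator ``tensor by $\Oo(\lambda)$'' on $K(\Nn^\vee)$ has convolution kernel the diagonal pushforward of the line bundle $\Oo(\lambda)$. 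One first checks that the $\Gm$-normalisation fixed in Section~\ref{subsection K-theoretic realizations} (fibres of $\Nn^\vee$ scaled by $\bq^{-1}$) introduces no power of $\bq^{1/2}$ here; this is immediate because the diagonal of $\St$ is contained in the zero section in each $\Nn^\vee$-fibre direction.

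Granting this, I would compute $\Phi(\theta_\lambda)$ directly. The composite $\Nn^\vee\xrightarrow{\iota}\St\into\Nn^\vee\times\Bb^\vee$ is $(x,\bB)\mapsto\bigl((x,\bB),\bB\bigr)$, a closed embedding with image the ``graph'' $\Gamma=\sets{((x,\bB),\bB)}{x\in\nN_{\bB}}$. Hence, before restriction, $\theta_\lambda$ pushes forward to the class of $\Oo_\Gamma\otimes\pi^*\Oo(\lambda)$ in $K(\Nn^\vee\times\Bb^\vee/G^\vee\times\Gm)$, and it remains to pull this back along the zero section $j\colon\Bb^\vee\times\Bb^\vee\into\Nn^\vee\times\Bb^\vee$ (zero section in the $\Nn^\vee$-factor, identity on the last $\Bb^\vee$-factor). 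The key point is that this pullback is underived: the scheme-theoretic intersection $\Gamma\cap j(\Bb^\vee\times\Bb^\vee)$ is cut out by $x=0$ together with the coincidence of the two flags, so it is the diagonal $\Delta(\Bb^\vee)$, and a tangent-space count at a point $p=((0,\bB),\bB)$ shows transversality: writing $n=\dim\Bb^\vee$, one has $T_p(\Nn^\vee\times\Bb^\vee)=T_\bB\Bb^\vee\oplus\nN_{\bB}\oplus T_\bB\Bb^\vee$ of dimension $3n$, while $T_p\Gamma=\{(u,\xi,u)\}$ and $T_pj(\Bb^\vee\times\Bb^\vee)=\{(v,0,w)\}$ are each $2n$-dimensional and visibly span the whole of $T_p(\Nn^\vee\times\Bb^\vee)$, meeting in the $n$-dimensional subspace $\{(u,0,u)\}$. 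Thus the intersection is Tor-independent, so $j^*\iota_*\bigl(\pi^*\Oo(\lambda)\bigr)=\Delta_*\bigl(\pi^*\Oo(\lambda)\big|_{\Delta(\Bb^\vee)}\bigr)=\Delta_*\Oo_{\Bb^\vee}(\lambda)$, the last equality because $\pi^*\Oo(\lambda)$ restricts to the zero section as $\Oo_{\Bb^\vee}(\lambda)$. This yields $\Phi(\theta_\lambda)=\Delta_*\Oo(\lambda)$.

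I expect the main obstacle to be bookkeeping rather than substance: pinning down the precise sign and $\bq$-normalisation so that the identification $\theta_\lambda\leftrightarrow\iota_*\bigl(\pi^*\Oo(\lambda)\bigr)$ holds exactly in the conventions of \cite{BasesI} adopted here (and not merely up to a power of $\bq^{1/2}$ or the automorphism swapping the two flag factors), and verifying the Tor-independence cleanly. For the latter there is an alternative: $\Phi$ is an algebra homomorphism and $\Delta_*\Oo(\lambda)\star\Delta_*\Oo(\mu)=\Delta_*\Oo(\lambda+\mu)$, together with $\Phi(\theta_0)=\Phi(1)=\Delta_*\Oo$; so it suffices to establish the formula for $\lambda$ a fundamental weight (and then for $-\lambda$ by invertibility of $\theta_\lambda$ and of $\Delta_*\Oo(\lambda)$ under $\star$), reducing to cases where the intersection geometry above is as transparent as possible. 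Either route closes the argument; I would present the direct transversality computation as the main proof and remark on the multiplicativity reduction as a check.
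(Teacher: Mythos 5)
The paper does not actually prove this lemma: it is recalled verbatim as \cite[Prop.~7.6.29]{CG}, so there is no internal argument to compare yours against. Your first paragraph is essentially a restatement of the cited proposition (the Bernstein subalgebra of $K(\St/G^\vee\times\Gm)$ is the diagonal copy $\iota_*\bigl(\pi^*K(\Bb^\vee)\bigr)$, with $\theta_\lambda\leftrightarrow\iota_*\pi^*\Oo(\lambda)$), and the genuine added content is your second paragraph, which correctly traces this class through the two steps defining $\Phi$ in Section~\ref{subsection K-theoretic realizations}: the composite $\Nn^\vee\to\St\into\Nn^\vee\times\Bb^\vee$ has image the graph $\Gamma$, the scheme-theoretic intersection of $\Gamma$ with the zero section $j(\Bb^\vee\times\Bb^\vee)$ is $\Delta(\Bb^\vee)$, and your tangent-space count does establish transversality, hence Tor-independence, hence $j^*\iota_*\pi^*\Oo(\lambda)=\Delta_*\Oo(\lambda)$ with no derived correction. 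This is a legitimate and self-contained derivation of the recalled fact, and the multiplicativity remark ($\Phi$ an algebra map, $\Delta_*\Oo(\lambda)\star\Delta_*\Oo(\mu)=\Delta_*\Oo(\lambda+\mu)$, reduce to fundamental weights and their inverses) is a sound consistency check.

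One sentence is wrong as written, though it does not affect the conclusion: the diagonal copy of $\Nn^\vee$ in $\St$ is certainly \emph{not} contained in the zero section in the fibre directions, so that is not the reason no power of $\bq^{1/2}$ appears. The correct points are (i) that the $\Gm$-weight only enters through the conormal/fibre directions, and after restricting along $j$ the surviving class $\Oo(\lambda)$ on $\Delta(\Bb^\vee)$ carries trivial $\Gm$-weight; and (ii) separately, that the identification $\theta_\lambda\leftrightarrow\iota_*\pi^*\Oo(\lambda)$ itself holds on the nose (and not up to $\bq^{\pm\langle\lambda,\rho\rangle}$ or a swap of factors) in the normalisation of \cite{BasesI} adopted here. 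Point (ii) is exactly the convention-matching you flag as the remaining bookkeeping; since it is the content of the citation rather than of your computation, flagging it is acceptable, but you should not present the ``contained in the zero section'' remark as its justification.
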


\section{Formulas for the lowest two-sided cell}
\subsection{Proofs of Theorem \ref{thm K theory formulas} and Corollary \ref{cor symm and pos dc0}}
\label{subsection proof of thm for dc0}
%Part (a) follows from the fact that $t_{w_0}j=j$ for any $j\in t_{w_0}J$, and left $G(\Oo)$-invariance of $t_{w_0}$, a special case of the proof of 
%\cite[Thm. 4.1]{Plancherel}.

%Functions in $t_{w_0}J_0 t_{w_0}$ are $G(\Oo)\times I$-invariant, hence are functions of $X_*$, or equivalently of $X_*^+\times W_f$.
\begin{proof}[Proof of Theorem \ref{thm K theory formulas}]
Let $\gamma$ be dominant and $t_w\in t_{d} J t_{d'}$ correspond to $\lambda$ dominant. Write $x_d$ for the Steinberg weight $x_u$ corresponding to $d$. Write $t_d=\Oo(x_d)\boxtimes[\Oo(x_d)]^*$
and suppose that
\[
t_{d'}=\Oo(x_{d'})\boxtimes[\Oo(x_{d'})]^*=\Oo(x_{d'})\boxtimes\Oo(y_{d'})[n(d')]
\]
as in \eqref{eqn based ring iso}.

The right hand side of Theorem \ref{thm K theory formulas} is a rational function of $\bq$, and the same is true of $a_{x,w}$ by \cite{Plancherel}. The logic of Section 3.3 of \textit{op. cit.} says that it therefore suffices to prove the theorem for $\bq=q$ for all $q>1$. Recall from Section \ref{subsection SI}, that after so specializing, we can view $t_w$ as a Harish-Chandra Schwartz function on $G(F)$. We can study functions $f$ via the Plancherel formula \cite{OpdamSpectral}, using that 
%%
%\begin{equation}
%\label{eqn tw values via Plancherel}
%t_w(I\varpi^{-\gamma} I)=q^{-\ell(\gamma)}(t_w\star T_{\gamma})(1)
%=
%q^{-\frac{\ell(\gamma)}{2}}(t_w\star \theta_{\gamma})(1)
%\end{equation}
%%
%We will compute this value via the Plancherel formula, using that 
%
\[
(f\star T_{w^{-1}})(1)=\dInt{f(g)T_{w^{-1}}(g^{-1})}{g}=\vol(IwI)f(w).
\]
Recall from Section \ref{subsection SI} that $J_0$ annihilates all tempered representations except the unitary principal series, and that traces of elements of $J_0$ acting on tempered 
principal series can be computed in the model provided by \eqref{eqn basic affine isomorphism}.
%\begin{equation}
%\label{eqn basic affine isomorphism}
%\mathcal{S}^I\simeq K(\Bb/(T^\vee\times\Gm))|_{v=q^{\frac{1}{2}}}
%\end{equation}
%
%of \textit{op. cit.}, where $v$ is the class of the identity character of $\Gm$. Indeed by \textit{op. cit.}, the latter specializes to the principal series for almost all unramified characters $\chi$ of $T(F)$, and traces of elements of $J_0$ are algebraic functions of $\chi$ by a special case of \cite[Lem. 2.17]{Plancherel}. 
%
%Recall from \cite{XiLowIII} that Lusztig's morphism $\phi$ is conjugate to the map $\Phi$ recalled in Section \ref{subsection K-theoretic realizations}.
 
Recall that to realize elements of $J$ as Schwartz functions on $G(F)$, we have to twist $\phi$ by the automorphism ${}^\dagger(-)$ defined in \cite[Def.1.6]{Plancherel}. 
%
%Now let $\gamma$ be dominant and let $w\in t_dJt_{d'}$ correspond to $\lambda$.  
We compute that
\begin{align}
t_{w}(\varpi^{\gamma})&=
q^{-\ell(\gamma)}\left(t_w\star T_{-\gamma}\right)(1)
\\
&=
q^{-\ell(\gamma)}\left(t_w\phi\left({}^\dagger T_{-\gamma}\right)\right)(1)
\\
&=
(-1)^{\ell(\omega(\gamma)_f)} q^{-\ell(\gamma)}\left(t_w\phi\left(q^{\ell(\gamma)} T_{\gamma}^{-1}\right)\right)(1)
\\
&=
(-1)^{\ell(\omega(\gamma)_f)}q^{-\frac{\ell(\gamma)}{2}}\left(t_w\phi\left(\theta_{-\gamma}\right)\right)(1)
\label{eqn traces tw at gamma x pre idempotent added}
\\
&=
(-1)^{\ell(\omega(\gamma)_f)}q^{-\frac{\ell(\gamma)}{2}}\left(t_w\phi\left(\theta_{-\gamma}\right)t_{dp}\right)(1)
\label{eqn traces tw at gamma x idempotent added}
\end{align}
where between lines \eqref{eqn traces tw at gamma x pre idempotent added} and \eqref{eqn traces tw at gamma x idempotent added} we used Lemma \ref{lem trace} (a).

We claim that \eqref{eqn traces tw at gamma x idempotent added} equals
\begin{equation}
\label{eqn tw integral trace}
(-1)^{\ell(\omega(\gamma)_f)}q^{-\frac{\ell(\gamma)}{2}}\dInt{\trace{\pi}{V(\lambda)\otimes \Oo(x_d)\boxtimes\Oo(y_{d'})[n(d')]\star\Delta_*\Oo\left(-\gamma\right)\star \Oo(x_d)\boxtimes[\Oo(x_d)]^*}}{\pi}.
\end{equation}

Indeed, the only possible eigenvectors of $t_w\star T_{-\gamma}$ lie in the image of $t_d$, and the upshot of Section \ref{subsection K-theoretic realizations} is that under the isomorphism of based rings \eqref{eqn based ring iso}, 
\[
t_{w}\star\phi\left(\theta_{-\gamma}\right)\star t_{d}
\]
is sent to
\begin{align}
&V(\lambda)\otimes\Oo(x_d)\boxtimes\Oo(y_{d'})[n(d')]
\star\Delta_*\Oo(-\gamma)\star\Oo(x_d)\boxtimes[\Oo(x_d)]^*
\\
&=
(-1)^{n(d')}V(\lambda)\otimes R\Gamma(\Oo(-\gamma+x_d+y_{d'}))\Oo(x_d)\boxtimes[\Oo(x_d)]^*
\nonumber
\\
&=
(-1)^{\ell(w_0)}(-1)^{n(d')}V(\lambda)\otimes V(-w_0(\gamma-x_d-y_{d'}-2\rho))\otimes\Oo(x_d)\boxtimes[\Oo(x_d)]^*,
\label{eqn tw trace rewrite}
\end{align}
where the last equality is by the Borel-Weil-Bott theorem, and we used the hypothesis that $\gamma$ is far from the walls.
%\todo[inline]{need to say sufficiently dominant everywhere, clear from proof that otherwise get zero.}

Therefore, combining \eqref{eqn tw integral trace} and \eqref{eqn tw trace rewrite}, we have, by the Plancherel formula and Lemma \ref{lem trace} (b), 
\begin{align}
t_{w}(\varpi^{\gamma})
&=
q^{-\frac{\ell(\gamma)}{2}}(-1)^{\ell(\omega(\gamma)_f)+\ell(w_0)+n(d')}
\dIntOver{\mathrm{P.S.}}{V(\lambda)V\left(-w_0\left(\gamma-x_d-y_{d'}-2\rho\right)\right)}{\pi_{\mu_I}}
\nonumber
\\
&=
q^{-\frac{\ell(\gamma)}{2}}(-1)^{\ell(\omega(\gamma)_f)+\ell(w_0)+n(d')} \dIntOver{\mathrm{P.S.}}{V(\lambda)\overline{V\left(\gamma-x_d-y_{d'}-2\rho\right)}}{\pi_{\mu_I}}
\label{eqn Maconald inner product I Haar}
\\
&=
\frac{1}{\mu_I(K)}
q^{-\frac{\ell(\gamma)}{2}}
(-1)^{\ell(\omega(\gamma)_f)+\ell(w_0)+n(d')}
\dIntOver{\mathrm{P.S.}}{V(\lambda)\overline{V\left(\gamma-x_d-y_{d'}-2\rho\right)}}{\pi_{\mu_K}}
\label{eqn Maconald inner product K Haar}
\\
&=
\frac{1}{\mu_I(K)}
q^{-\frac{\ell(\gamma)}{2}}
(-1)^{\ell(\omega(\gamma)_f)+\ell(w_0)+n(d')}
P_W(q^{-1})
\pair{V(\lambda)}{V(\gamma-x_d-y_{d'}-2\rho)}_{q^{-1}}
\label{eqn Macdonald inner product}
\\
&=
\frac{1}{\mu_I(K)}
(-1)^{\ell(\omega(\gamma)_f)+\ell(w_0)+n(d')}
q^{-\frac{\ell(\gamma)}{2}}
\sum_{i}\dim\HomOver{G^\vee}{V(\lambda)}{V(\gamma-x_d-y_{d'}-2\rho)\otimes\Oo(\mathcal{N}^\vee)_i}q^{-i}.
\label{eqn tw(IgammaI)}\end{align}
Here the integral is taken over the tempered unramified principal series of $G(F)$. Between lines \eqref{eqn Maconald inner product I Haar} and \eqref{eqn Maconald inner product K Haar} we used that the Plancherel measure scales inversely to the Haar measure on $G(F)$, that $\mu_I(K)=P_W(q)=q^{-\ell(w_0)}P_W(q^{-1})$ for $P_W$ the Poincar\'{e} polynomial of $W$, and that and that
$\pair{-}{-}_{q^{-1}}$ is the specialization of the Macdonald inner product \cite[\S 3]{NelsonRam} normalized for $\mathrm{d}\pi_{\mu_K}$. Between lines \eqref{eqn Macdonald inner product} and \eqref{eqn tw(IgammaI)} we used Equation (3.13) and the equation following (3.10) in \textit{loc. cit.}.
This proves \eqref{tdJtd' dominant formula denom}.

To deduce \eqref{tdJtd' dominant formula no denom} from \eqref{tdJtd' dominant formula denom}, we used that the graded character of $\Oo(\g^\vee)^{G^\vee}$ is exactly 
\[
\frac{\prod_{i=1}^{r}(1-q^{-1})}{P_W(q^{-1})}
\]
by \cite[(3.11)]{NelsonRam}.

Finally, note that when $y_w$ exists, $x_w+y_w=r\cdot 0$ for some $r\in W_f$.

\end{proof}

\begin{proof}[Proof of Corollary \ref{cor symm and pos dc0}]
Part (a), follows from symmetry of the Macdonald inner product in \eqref{eqn Macdonald inner product} (corresponding via Parseval's theorem to the fact that the Kazhdan-Lusztig elements are real-valued functions on $G(F)$).
%
% writing the multiplicity in \eqref{tdJtd' dominant formula denom} as 
%%
%\[
%\HomOver{G^\vee}{V(\lambda)\otimes V(\gamma-x_{u'}-y_{u'}-\rho)^*}{\Oo(\mathcal{N}^\vee_i)}.
%\]
%%

For (b), we need only note that by \cite[\S 4]{Plancherel}, if $j\in t_d J_0 t_d$, then $j$ is $\mathcal{P}\times I$-invariant, where $\mathcal{P}$ is the parahoric subgroup corresponding to $\bangles{\mathcal{L}(d)}\subset\Waff$.
\end{proof}

\subsection{Proof of Corollary \ref{cor tw0 Jtdc0}}
\begin{proof}[Proof of Corollary \ref{cor tw0 Jtdc0}]
By Lemma \ref{Lem dual classes}, $t_{d_{\cc_0}}$ corresponds to $\Oo(-\rho)\boxtimes\Oo(-\rho)[\ell(w_0)]$. Thus the first statement is just the specialization of Corollary \ref{cor symm and pos dc0} (a), as are parts (a) and (b) of the second statement.

Finally, we prove (c). Let $\gamma$ be dominant and let $x\in W_f$. Let $j\in t_{w_0}J t_{d_{\cc_0}}$. By \eqref{eqn based ring iso} there is a class $V_j\in K(\pt/G^\vee)$ such that we have
\begin{align}
j(\varpi^{x(\gamma)})
&=
q^{-\ell(\lambda)}j\star T_{-x(\gamma)}(1)
\nonumber
\\
&=
q^{-\ell(\lambda)}j\star \phi\left({}^\dagger T_{-x(\gamma)}\right)(1)
\nonumber
\\
&=
q^{-\ell(\lambda)}j\star \phi\left(T_{x(\gamma)}^{-1}\right)(1)
\label{eqn DemLusztig x pre iverse}
\\
&=
q^{-\ell(\lambda)}j\star \phi\left( T_{\gamma x^{-1}}^{-1}\right)(1)
\label{eqn DemLusztig x post inverse}
\\
&=
q^{-\ell(\lambda)}j\star \phi\left( T_xT_{\gamma}^{-1}\right)(1)
\label{eqn DemLusztig x post DemRearrange}
\\
&=
q^{-\frac{\ell(\gamma)}{2}} j\star \phi\left( T_x\theta_{-\gamma}\right)(1)
\label{eqn DemLusztig x pre DL action}
\\
&=
q^{-\frac{\ell(\gamma)}{2}}(-1)^{\ell(w_0)}\dIntOver{\mathrm{P.S.}}{\mathrm{trace}\left(\pi, V_j\otimes\left(\Oo\boxtimes\Oo(-\rho)\star T_x\star\Delta_*\Oo(-\gamma)\right)\right)}{\pi_{\mu_I}}
\\
&=
q^{-\frac{\ell(\gamma)}{2}}(-1)^{\ell(w_0)}\dIntOver{\mathrm{P.S.}}{\mathrm{trace}\left(\pi, V_j\otimes\left(\Oo\boxtimes\Oo(-\rho)\star T_x\star\Delta_*\Oo(-\gamma)\star T_x^{-1}\star\Oo\boxtimes[\Oo]^*\right)\right)}{\pi_{\mu_I}}
\\
&=
q^{-\frac{\ell(\gamma)}{2}}(-1)^{\ell(w_0)}(-1)^{\ell(x)}\dIntOver{\mathrm{P.S.}}{\mathrm{trace}\left(\pi, V_j\otimes\left(\Oo\boxtimes\Oo(-\rho)\star T_x\star\Delta_*\Oo(-\gamma)\star\Oo\boxtimes[\Oo]^*\right)\right)}{\pi_{\mu_I}}
\\
&=
q^{-\frac{\ell(\gamma)}{2}}(-1)^{\ell(w_0)}(-1)^{\ell(x)}\dIntOver{\mathrm{P.S.}}{\mathrm{trace}\left(\pi, V_j\otimes 
R\Gamma\left(\Oo(-\rho)\otimes\left(T_x\star\Oo(-\gamma)\right)\right)
\otimes\Oo\boxtimes[\Oo]^*
\right)}{\pi_{\mu_I}}
\\
&=
q^{-\frac{\ell(\gamma)}{2}}(-1)^{\ell(w_0)}(-1)^{\ell(x)}\dIntOver{\mathrm{P.S.}}{V_j\otimes 
R\Gamma\left(\Oo(-\rho)\otimes\left(T_x\star\Oo(-\gamma)\right)\right)}{\pi_{\mu_I}}
\label{eqn DemLusztig x pre DL action expanded}
\\
&=
q^{-\frac{\ell(\gamma)}{2}}(-1)^{\ell(w_0)}(-q)^{\ell(x)}\dIntOver{\mathrm{P.S.}}{V_j\otimes 
R\Gamma\left(\Oo(-\rho)\otimes\Oo(-\gamma)\right)}{\pi_{\mu_I}}
\label{eqn DemLusztig x post DL action}
\\
&=
(-q)^{\ell(x)}
j(\varpi^\gamma).
\nonumber
\end{align}

Between lines \eqref{eqn DemLusztig x pre iverse} and \eqref{eqn DemLusztig x post inverse} we used that 
$\ell(\gamma x^{-1})=\ell(\gamma)-\ell(x)$, so that 
\[
T_xT_{\gamma x^{-1}}=T_{x(\gamma)}
\]
whence 
\[
T_{x(\gamma)}^{-1}=T_{\gamma x^{-1}}^{-1}T_{x}^{-1}.
\]
Between lines \eqref{eqn DemLusztig x post inverse} and \eqref{eqn DemLusztig x post DemRearrange}, we used 
\[
T_{\gamma x^{-1}}T_x=T_{\gamma}, 
\]
which implies
\[
T_{\gamma x^{-1}}^{-1}=T_xT_{\gamma}^{-1}.
\]

Between lines \eqref{eqn DemLusztig x pre DL action} and \eqref{eqn DemLusztig x pre DL action expanded}, we expanded, used the same 
post-composition trick as in the proof of Theorem \ref{thm K theory formulas}, and applied Lemma \ref{lem DL ops} (a). 
Between lines \eqref{eqn DemLusztig x pre DL action expanded} and \eqref{eqn DemLusztig x post DL action}, we applied Lemma \ref{lem DL ops} (b). Indeed, writing 
$T_x=T_{s}T_{sx}$ for some $s$ and writing $\Phi(T_{sx})\star\Oo(-\gamma)$ as a linear combination of line bundles $\Oo(\lambda)$ with $\pair{\lambda}{\alpha^\vee}\leq 0$, the simplification follows by induction.
%
%Between lines \eqref{eqn DemLusztig x pre DL action} and \eqref{eqn DemLusztig x post DL action} we used Lemma \ref{lem DL ops} (b). Indeed, writing 
%$T_x=T_{s}T_{sx}$ for some $s$ and writing $\Phi(T_{sx})\star\Oo(-\gamma)$ as a linear combination of line bundles $\Oo(\lambda)$ with $\pair{\lambda}{\alpha^\vee}\leq 0$, the claim follows by induction.
%
%and the compatibilities of Section \ref{subsection K-theoretic realizations}, and between \eqref{eqn DemLusztig x post DL action} and \eqref{eqn DemLusztig x final} we used  the fact that values of general $j$ are gotten by replacing $V(\lambda)$ in \eqref{eqn tw(IgammaI)} with a fixed general element $V_j$ of $R(G^\vee)$.
%
%Finally, between \eqref{eqn DemLusztig x final} and \eqref{eqn DemLusztig x final final} we used that, for $\gamma$ sufficiently dominant, 
%%
%\[
%\ell(\gamma-\alpha_{i_1}-\cdots-\alpha_{i_n})=
%\pair{2\rho^\vee}{\gamma-\alpha_{i_1}-\cdots-\alpha_{i_n}}=
%\ell(\gamma)-2n=\ell(\gamma)-2\ell(x).
%\]
%
\end{proof}
\subsection{Proof of Corollary \ref{cor KL}: Spherical Kazhdan-Lusztig polynomials}
\label{subsection new basis}
\begin{proof}[Proof of Corollary \ref{cor KL}]
Let $t_w\in t_d Jt_{d'}$ where $d'$ satisfies the hypotheses of Theorem \ref{thm K theory formulas}. By taking linear combinations of such $t_w$, we may replace $V(\lambda)$ in \eqref{eqn Macdonald inner product} with (the Satake transform of) any other element of the spherical Hecke algebra. In particular, we may use the standard basis elements $q^{-\frac{\ell(\mu)}{2}}T_\mu$. Writing $P_\mu$ for the Hall-Littlewood polynomial 
\cite[(2.13)]{NelsonRam}, we obtain instead of \eqref{eqn Macdonald inner product} that there is a basis of $t_d Jt_{d'}$ consisting of functions $f_{\mu}=(-1)^{\ell(w_0)+n(d')}f'_\mu$, where 
\begin{align*}
f'_\mu(I\varpi^{\gamma} I)&=
\frac{1}{\mu_I(K)}
q^{-\frac{\ell(\gamma)}{2}}
(-1)^{\ell(\omega(\gamma)_f)+\ell(w_0)+n(d')}
P_W(q^{-1})
\pair{q^{-\frac{\ell(\mu)}{2}}P_\mu}{V(\gamma-x_d-y_{d'}-2\rho)}_{q^{-1}}
\\
&=
\frac{1}{\mu_I(K)}
q^{-\ell(\gamma)}
(-1)^{\ell(\omega(\gamma)_f)+\ell(w_0)+n(d')}
P_{\mu,\gamma-x_d-y_{d'}-2\rho}(q),
\end{align*}
where $P_{\mu,\gamma}$ is the spherical Kazhdan-Lusztig polynomial, and we used \cite[Thm. 3.17 (a), (d)]{NelsonRam}. 
\end{proof}

\section{Recurrence relations}
\label{section rec rel}
Let $X_*^+$ denote the dominant cocharacters. Casselman-Cely-Hales prove in \cite[Lem. 4.1.2]{CCH} that the function 
\[
P\colon X_*^+\times X_*^+\to \Z[q,q^{-1}]
\]
defined by $P(\mu,\gamma)= P_{\mu,\gamma}(q)$ is constructible with respect to the Presburger language, in the sense of model theory recalled in \cite[\S 3]{CCH}. By definition, Presburger-constructibility means that
\begin{equation}
\label{eqn Pres constructible definition}
P(\mu,\lambda)=P_{\mu,\lambda}(q)=\sum_{i=1}^{n}\frac{\prod_{j}\beta_{i,j}(\mu,\lambda)q^{\alpha_i(\mu,\lambda)}}{\prod_{k}(1-q^{a_{ik}})}
\end{equation}
for Presburger-definable functions $\alpha_i$ and $\beta_{i,j}$. 
In turn, definability of the $\alpha_i$ and $\beta_{i,j}$ means that there is a decomposition of $X_*^+\times X_*^+	$ into finitely-many subsets $C_1,\dots, C_r$ defined by linear inequalities and congruence conditions, such that for each $C=C_m$, all the functions $\beta_{i,j}|_C$ and $\alpha_i|_C$ are affine, \textit{i.e.} of the form
\[
\alpha_i(\mu,\lambda)=a^\lambda_i\cdot\lambda+a^\mu_i\cdot\mu+a_i^0.
\]
Note that the factor $\prod_{k}(1-q^{a_{ik}})$ is constant with respect to $\mu,\lambda$.

We want to understand the behaviour of any $f_\mu(N\lambda)$ for $N$ sufficiently large.
Writing \eqref{eqn Pres constructible definition} in coordinates $\nu=\sum_{i}\pair{\nu}{\varpi_i^\vee}\varpi_i$, $\nu_k=\pair{\nu}{\varpi^\vee_k}$, we obtain, for $(\mu,\nu)$ inside a fixed region $C$, 
\begin{equation}
\label{eqn presburger constructibility definition in coordiantes}
P_{\mu,\nu}(q)=\sum_{i=1}^{n}\prod_{j}\left(\sum_{k=1}^rb_{i,j,k}^1\nu_{k}+b_{i,j,k}^2\mu_k+b_{i,j}^0\right)q^{a_1^1\nu_1}q^{a_2^1\nu_1}\cdots q^{a_r^2\nu_r}q^{\pair{a_i^2}{\mu}+a_i^0}.
\end{equation}
In particular, inside each $C_i$, the function $\nu\mapsto P_{\mu,\nu}(q)$ satisfies finite linear recurrences with constant coefficients in all the directions $\varpi_1,\dots, \varpi_r$, as long as $\lambda+k\varpi_i$ remains in $C$. These recurrence relations depend only on the exponents $a^1_i$ and the degree in $\nu_i$ of the polynomials
\[
\prod_{j}\left(\sum_{k=1}^rb_{i,j,k}^1\nu_{k}+b_{i,j,k}^2\mu_k+b_{i,j}^0\right).
\]
The degrees of these polynomials, as polynomials in any $\nu_k$, depend only on $C$. Therefore the recurrence relations depend only on $C$ and the direction $\varpi_i$. In particular, they are independent of $\mu$ for large enough  $N$ and sufficiently large and generic $\lambda$, because then the chamber in $X_*^+$ containing $w\cdot(\lambda+N\varpi_i)-\mu$ is independent of $\mu$.

Hence the functions $\lambda\mapsto P_{\mu,\lambda}(q)$ satisfy recurrence relations along any affine ray $(\mu, \lambda+N\varpi_i)$ intersecting $C$. 
In particular, these functions are determined by finitely-many of their values in $C$.
\begin{prop}
\label{lem Presburger rec rel}
Fix $(\mu,\lambda)$ and let $B>0$ be such that there is $C_b$ such that $(\mu,N\lambda)\in C_b$ for all $N>B$. Then
there exists an integer $M=M(C_b)$ such that if the congruence class $N\equiv m\mod M$ of $N$ is fixed, we have
\begin{enumerate}
\item[(a)] 
\begin{equation}
\label{eqn Pres constructible rec rel}
P_{\mu,N\lambda}(q)=\sum_{i=1}^{n}A_{i,m}(N)(q^{a^\lambda_{i,b,m}})^N
\end{equation}
for polynomial functions $A_{i,b,m}$ and integers $a^\lambda_{i,b,m}$.
\item[(b)]
For $\lambda$ such that $(\mu,\nu+k\lambda)\in C$ for $k$  $N\equiv m\mod M$, the function
\[
N\mapsto f_\mu(N\lambda)=P_{\mu,N\lambda}(q)
\]
satisfies a finite linear recurrence relation in $N$ with constant coefficients. The recurrence relation depends only on the triple $(C_b, m, M(C_b))$.
\end{enumerate}
\end{prop}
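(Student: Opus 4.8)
The plan is to read everything off the Presburger-constructible form \eqref{eqn Pres constructible definition} together with the cell decomposition $X_*^+\times X_*^+=C_1\sqcup\cdots\sqcup C_r$ recalled just before the statement, and to exploit the fact that the ray $N\mapsto(\mu,N\lambda)$ meets that decomposition in an eventually periodic way. First I would set $M$ to be the least common multiple of all the moduli appearing in the congruence conditions that (together with linear inequalities) cut out the $C_j$. Along the ray, every defining linear functional evaluated at $(\mu,N\lambda)$ is an affine function of $N$, hence eventually of constant sign; so for $N$ larger than some $B$ the linear-inequality part of ``which $C_j$ contains $(\mu,N\lambda)$'' has stabilized, while the congruence part depends only on $N\bmod M$. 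Thus for $N>B$ with $N\equiv m\pmod M$ the point $(\mu,N\lambda)$ lies in a single cell, which is the cell $C_b$ of the hypothesis (I would allow it a priori to depend on $m$).

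For part (a): on $C_b$ the functions $\alpha_i$ and $\beta_{i,j}$ are affine in $(\mu,\lambda)$, so substituting $N\lambda$ for $\lambda$ makes them affine in $N$, say $\alpha_i(\mu,N\lambda)=(a^\lambda_i\cdot\lambda)N+c_i$ and $\beta_{i,j}(\mu,N\lambda)=(b^\lambda_{i,j}\cdot\lambda)N+c_{i,j}$. Substituting into \eqref{eqn Pres constructible definition}, the factor $\prod_j\beta_{i,j}(\mu,N\lambda)$ is a polynomial in $N$ of degree at most the number $n_i$ of factors, $q^{\alpha_i(\mu,N\lambda)}=q^{c_i}\,(q^{a^\lambda_i\cdot\lambda})^N$, and $\prod_k(1-q^{a_{ik}})$ does not involve $N$. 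Collecting the $N$-dependence gives exactly \eqref{eqn Pres constructible rec rel}, with $a^\lambda_{i,b,m}=a^\lambda_i\cdot\lambda$ and $A_{i,m}(N)=q^{c_i}\prod_j\beta_{i,j}(\mu,N\lambda)/\prod_k(1-q^{a_{ik}})$, a polynomial in $N$ whose coefficients are rational functions of $q$ termwise and lie in $\Z[q,q^{-1}]$ after the sum over $i$ is taken, since $P$ itself is $\Z[q,q^{-1}]$-valued.

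For part (b): I would invoke the standard fact (partial fractions of the generating series) that any sequence $N\mapsto\sum_i A_i(N)\rho_i^N$ with the $A_i$ polynomials of degree $\le n_i$ and the $\rho_i$ fixed is $C$-finite, annihilated by the linear recurrence whose characteristic polynomial is $\prod_i(X-\rho_i)^{n_i+1}$. Applying this to the formula of part (a), with $\rho_i=q^{a^\lambda_i\cdot\lambda}$ and $n_i$ the number of $\beta_{i,j}$-factors attached to the $i$-th summand of the expression for $C_b$, yields the asserted finite linear recurrence with constant coefficients for $N\mapsto f_\mu(N\lambda)=P_{\mu,N\lambda}(q)$, valid for $N>B$ with $N\equiv m\pmod M$. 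Since $\lambda$ is fixed, the data $(\rho_i,n_i)$ — hence the characteristic polynomial, hence the recurrence — is determined by $C_b$ alone, which together with the residue $m$ and the modulus $M=M(C_b)$ is the stated dependence; the already-noted stabilization of the relevant chamber for large, generic $\lambda$ then gives the $\mu$-independence.

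The one step with any real content is the first: fixing $B$ and $M$ and verifying that membership of $(\mu,N\lambda)$ in the cells is genuinely eventually periodic in $N$. Once that is in place, both (a) and (b) are routine manipulations of \eqref{eqn Pres constructible definition} and an appeal to the elementary theory of $C$-finite sequences, so I do not expect a serious obstacle beyond the bookkeeping.
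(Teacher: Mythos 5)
Your proof is correct and follows essentially the same route as the paper's: both read the result directly off the Presburger-constructible form \eqref{eqn Pres constructible definition}, substitute $N\lambda$, and appeal to the standard characterization of $C$-finite sequences as those of the form $\sum_i A_i(N)\rho_i^N$. The only material you add is the verification that cell membership along the ray is eventually periodic in $N$, which the paper instead folds into the hypothesis of the proposition; this is a welcome detail, not a different argument.
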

\begin{proof}
Part (a) is just a rewriting of \eqref{eqn Pres constructible definition}. Part (b) follows as functions of the form \eqref{eqn Pres constructible rec rel} are precisely the functions eventually satisfying such recurrence relations.
\end{proof}
%
%A more general perspective on recurrence relations, applicable to all of $J$, will appear in the forthcoming work \todo[inline]{cite} with \todo[inline]{if only...}
In type $A$ for $\mu=0$, the existence of these recurrence relations follows alternatively from the rationality of the generating function for stretched Kostka-Foulkes polynomials \cite[p. 91, \S7, Exercise 5]{Kirillov}.

\section{Other two-sided cells}
In this section $G=\GL_n$. Then two-sided cells $\cc$ for $\Waff$ are in bijection with Levi subgroups $M_\cc$ of $G$. When $M_\cc=\GL_m^{\times r}$ is homogeneous, we have a limited version of Theorem \ref{thm K theory formulas} for $\cc$. In this case, if $N=N(\cc)$ is the nilpotent corresponding to $\cc$ under Lusztig's bijection, then $Z_{G^\vee}(N)^{\mathrm{red}}=\GL_r$.
\begin{theorem}
\label{thm GLn homogeneous levi}
Suppose that $\cc$ corresponds to a partition with $r$ parts all equal to $m$.
Let $t_w\in t_d Jt_d$ with $d\in\cc$, and let $t_w$ correspond to $V(\lambda_r)$ under the isomorphism $t_dJt_d\simeq R(\GL_r)$ of based rings of \cite{XiTypeA}. Then 
\[
t_w(1)=\frac{d(\omega) P_{\GL_n/P_\cc}(q^{-1})}{P_{\Sn_r}(q^{-m})P_{\Sn_r}(q^{m})}
\sum_{i}\dim\HomOver{\GL_r}{V(\lambda_r)}{\Oo(\mathcal{N}^\vee_{\GL_r})_i}q^{-mi}=
\frac{d(\omega) P_{\GL_n/P_\cc}(q^{-1})}{P_{\Sn_r}(q^{-m})P_{\Sn_r}(q^{m})}
P^{\GL_r}_{0,\lambda_r}(q^m)
,
\]
where $P_{\GL_n/P_\cc}(q^{-1})$ is the Poincar\'{e} polynomial, $d(\omega)=d(\St_{\GL_n})^r$ is the formal degree of $\St_{\GL_m}^{\boxtimes r}$, and $P^{\GL_r}_{\mu,\lambda}$ is the spherical Kazhdan-Lusztig polynomial for $\GL_r$.
\end{theorem}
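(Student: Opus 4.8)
The plan is to evaluate $t_w(1)$ through the Plancherel formula, following the strategy of the proof of Theorem \ref{thm K theory formulas} but with the unramified principal series of $\GL_n(F)$ replaced by the single tempered family on which $J_\cc$ acts. For a \emph{homogeneous} Levi this family carries the same Plancherel density as the unramified principal series of $\GL_r$ after the substitution $q\mapsto q^m$, and that is the source of the formula. (Only the value at $1$ is accessible this way: at other points one would need $\phi_\cc(\theta_\gamma)$, which is not known geometrically.)

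First I would specialize $\bq=q$ for $q>1$, so that $t_w$ becomes a Harish-Chandra Schwartz function on $\GL_n(F)$ and $t_w(1)=\int\trace{\pi}{t_w}\,\mathrm{d}\mu_{\mathrm{Pl}}(\pi)$. By the analogue of Lemma \ref{lem trace}(a) for $J_\cc$ (the description of the image of $J$ in the Schwartz space of \cite{BK}, as exploited in \cite{Plancherel}), every tempered $\pi$ is annihilated by $t_w\in t_dJt_d$ except the family $\pi_\nu=i_{P_\cc}^{\GL_n}\!\left(\St_{\GL_m}^{\boxtimes r}\otimes\nu\right)$, with $\nu$ running over the unramified unitary characters of the split centre $(\Gm)^r$ of $M_\cc$ modulo the block-permutation action of $\Sn_r$. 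On this family the based-ring isomorphism $t_dJt_d\simeq R(\GL_r)$ of \cite{XiTypeA}, together with the analogue of Lemma \ref{lem trace}(b), identifies $\trace{\pi_\nu}{t_w}$ with the character $V(\lambda_r)$ evaluated at the point of $(\C^\times)^r/\Sn_r$ determined by $\nu$, so that $t_w(1)=\int V(\lambda_r)(\nu)\,\mathrm{d}\mu_{\mathrm{Pl}}(\pi_\nu)$.

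The crucial input is the computation of $\mathrm{d}\mu_{\mathrm{Pl}}(\pi_\nu)$. By Harish-Chandra's product formula this density equals $d(\omega)$ times the Harish-Chandra $\mu$-function of the family $\pi_\nu$, up to an explicit measure-normalization constant; and because the self-dual Steinberg representation of each $\GL_m$-block has its reducibility data governed by $q^{\pm m}$, this $\mu$-function coincides, term by term, with the Macdonald/Plancherel density of the unramified principal series of $\GL_r$ over a residue field of size $q^m$. This is the coincidence of Plancherel measures, and homogeneity of $M_\cc$ is essential here: with blocks of distinct sizes $m_i$ the reducibility would involve the distinct $q^{m_i}$ and no single $\GL_r$-density would match. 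Keeping track of the normalization — the factor $\mu_I(K)$ for $\GL_n$ against the corresponding factor for $\GL_r$ over residue field $q^m$, and the identity $P_{\Sn_n}(q^{-1})=P_{\GL_n/P_\cc}(q^{-1})\,P_{\Sn_m}(q^{-1})^r$ — then produces the prefactor $d(\omega)\,P_{\GL_n/P_\cc}(q^{-1})/\bigl(P_{\Sn_r}(q^{-m})\,P_{\Sn_r}(q^{m})\bigr)$.

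With this in hand, the remaining integral $\int V(\lambda_r)(\nu)\,\mu^{\GL_r}(\nu)\,\mathrm{d}\nu$ is, up to the constants collected above, the Macdonald inner product $\pair{s_{\lambda_r}}{1}_{q^{-m}}$ for $\GL_r$, and \cite[Thm.\ 3.17(a),(d)]{NelsonRam} — applied exactly as in the proof of Corollary \ref{cor KL}, but with $q$ replaced by $q^m$ — rewrites it both as $P_{\Sn_r}(q^{-m})^{-1}\sum_i\dim\HomOver{\GL_r}{V(\lambda_r)}{\Oo(\mathcal{N}^\vee_{\GL_r})_i}\,q^{-mi}$ and as the spherical Kazhdan-Lusztig polynomial $P^{\GL_r}_{0,\lambda_r}(q^m)$, giving the two displayed expressions. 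I expect the main obstacle to be the third step: establishing the Plancherel-density identity with all constants pinned down — the formal degree $d(\omega)$ of $\St_{\GL_m}^{\boxtimes r}$, the Poincar\'e-polynomial factors, and the Haar/Plancherel normalization relating $\GL_n(F)$ to the virtual ``$\GL_r$ over a residue field of size $q^m$'' — which forces one to combine Harish-Chandra's formula with the explicit reducibility data for Steinberg representations of $\GL_m$ and then match it term by term against the Nelson-Ram normalization for $\GL_r$.
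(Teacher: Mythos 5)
Your proposal is correct and follows essentially the same route as the paper: reduce to the Plancherel formula on the single tempered family $i_{P_\cc}^{\GL_n}(\St_{\GL_m}^{\boxtimes r}\otimes\nu)$, invoke the coincidence of $\mu$-functions $\mu_{M,\GL_n}(\underline{z},q)=\mu_{T,\GL_r}(\underline{z},q^m)$ (which the paper phrases as transferring the integral to the tempered principal series of $\GL_r$ over a degree-$m$ unramified extension of $F$), and finish with the Nelson--Ram identities exactly as in Corollary \ref{cor KL}. You also correctly identify both the essential role of homogeneity and the bookkeeping of formal degrees and Poincar\'e-polynomial normalizations as the only delicate points.
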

\begin{proof}
When $\cc$ is a two-sided cell for $\GL_n$ whose corresponding partition 
\[
(\underbrace{m, \dots, m}_r)
\]
has all parts equal, we have the equality of Harish-Chandra $\mu$-functions
\begin{equation}
\label{eqn mu coincidence}
\mu_{M,\GL_n}(z_1,z_2,\dots, z_r, q)=\mu_{T, \GL_r}(z_1,z_2,\dots, z_r, q^m)
\end{equation}
where $\mu_{T \GL_r}$ is the $\mu$-function in the summand of the Plancherel measure for $\GL_r(F)$ corresponding to the principal series. Recall also that $J_\cc$ annihilates all tempered representations except $\pi_\nu=i_{P_M}^G(\St\otimes\nu)$, and that Harish-Chandra's canonical measure $d\nu$ agrees for $M_P$ and $\GL_r$. (Indeed, in the notation of \cite[\S 2.6]{Plancherel}, we have $q_{ij}=1$ and $q^{ij}=q^{m}$ for all $i<j$.) 

Let $t_w=V(\lambda_r)t_d$ for $V(\lambda_r)\in R(\GL_r)$. Then the Plancherel theorem again gives, by \cite[Rem. 5.6]{AubertPlymen}, and the reasoning of Section \ref{subsection proof of thm for dc0},
\begin{align*}
t_w(1)&=d(\omega) P_{\GL_n/P_\cc}(q^{-1})\dIntOver{\X(M_\cc)\cdot\omega}{\trace{\pi_\nu}{t_w}\mu_{M, \GL_n}(\nu, q)}{\nu}
\\
&=\frac{d(\omega) P_{\GL_n/P_\cc}(q^{-1})}{P_{\Sn_r}(q^{-m})P_{\Sn_r}(q^{m})}\dIntOver{\mathrm{P.S.}}{V(\lambda_r)\mu_{T, \GL_r}(\nu, q^{m})}{\nu}
\\
&=
\frac{d(\omega) P_{\GL_n/P_\cc}(q^{-1})}{P_{\Sn_r}(q^{-m})P_{\Sn_r}(q^{m})}
\sum_{i}\dim\HomOver{\GL_r}{V(\lambda_r)}{\Oo(\mathcal{N}^\vee_{\GL_r})_i}q^{-mi},
\end{align*}
where $\X(M_\cc)\cdot\omega$ is the orbit of $\omega$ under the action of unramified characters of $M_\cc$, as recalled for instance in \cite[\S 2.2.3]{Plancherel}. Between the first and second lines, we rewrote the integral to be over the tempered principal series of $\GL_r(F')$, where $F'$ is any degree $m$ unramified extension of $F$, and rescaled the Haar measure of $\GL_r(F')$ to give $\GL_r(\Oo_{F'})$ unit volume, thereby rescaling the Plancherel measure. Between the second and third lines, we applied our trick from the proof of Theorem \ref{thm K theory formulas}. The last equality follows as in the proof of Corollary \ref{cor KL}.
\end{proof}
\begin{rem}
Whenever $M_\cc$ has blocks of different sizes, the coincidence \eqref{eqn mu coincidence} of $\mu$-functions fails.
\end{rem}

\begin{rem}
If $\lambda_r$ is spiral for $\GL_r$, in particular if $r=2$, the Kazhdan-Lusztig polynomial $P^{\GL_r}_{0,\lambda_r}$ is trivial.
\end{rem}

%todo[inline]{GL(3) example? do subregular cell?}

\bibliography{J0_positivity_paper_biblio.bib}

\end{document}